\NewDocumentCommand{\INTERVALINNARDS}{ m m }{
	#1 {,} #2
}
\NewDocumentCommand{\interval}{ s m >{\SplitArgument{1}{,}}m m o }
{
	\IfBooleanTF{#1}{
		\left#2 \INTERVALINNARDS #3 \right#4
	}{
		\IfValueTF{#5}{
			#5{#2} \INTERVALINNARDS #3 #5{#4}
		}{
			#2 \INTERVALINNARDS #3 #4
		}
	}
}
\newcommand\irregularcircle[2]{
	\pgfextra {\pgfmathsetmacro\len{(#1)+rand*(#2)}}
	+(0:\len pt)
	\foreach \a in {10,20,...,350}{
		\pgfextra {\pgfmathsetmacro\len{(#1)+rand*(#2)}}
		-- +(\a:\len pt)
	} -- cycle
}
\newtheorem{theorem}{Theorem}[section]
\newtheorem{corollary}{Corollary}[theorem]
\newtheorem{definition}{Definition}[section]
\newtheorem{lemma}[theorem]{Lemma}
\newtheorem{example}{Example}[section]
\newtheorem{Remark}{Remark}[section]
\begin{document} 
	\title{\textbf{Sum of the exponential and a polynomial: Singular values and Baker wandering domains}}
	\author{Sukanta Das\footnote{ Supported by UGC, Govt. of India, a21ma09005@iitbbs.ac.in}}
	\author[1]{Tarakanta Nayak\footnote{Corresponding author, tnayak@iitbbs.ac.in}}
	\affil[1]{Department of Mathematics, School of Basic Sciences, 
		Indian Institute of Technology Bhubaneswar, India}
	\date{}
	\maketitle
	\begin{abstract}
	This article studies the singular values of  entire functions of the form $E^k (z)+P(z)$ where $E^k$ denotes the  $k-$times composition of $e^z$ with itself and $P$ is any non-constant polynomial. 
	It is proved that the full preimage of each  neighborhood of $\infty$ is an infinitely connected domain without having any unbounded boundary component. Following the literature, the point at $\infty$ is called a Baker omitted value  for the function in such a situation. More importantly, there are infinitely many critical values and no finite asymptotic value and in fact, the set of all critical values is found to be unbounded for these functions. We also investigate the iteration of three examples of entire functions with Baker omitted value and prove that these do not have any Baker wandering domain.  There is a conjecture stating that the number of completely invariant domains of a transcendental entire function is at most one. How some of these maps are right candidates to work upon in view of this conjecture is demonstrated.   
\end{abstract}
		\textit{Keywords:}
	Transcendental entire function; Singular value; Baker omitted value;  Baker wandering domain.\\
	
		AMS Subject Classification: 37F10, 30D05

%

\section{Introduction}
A function defined in the complex plane is called entire if it is analytic everywhere. An entire function is a non-constant polynomial if and only if it has a pole at $\infty$. The exponential $z \mapsto e^z$, denoted by $E$ in this article has, an essential singularity at $\infty$ which makes it qualitatively different from polynomials. 
 We consider  entire functions of the form $E^k (z)+P(z)$ for $k \geq 1$ where $E^k$ denotes the  $k-$times composition of $e^z$ with itself and $P$ is a non-constant polynomial. The purpose of this article is to explore some important mappings as well as iterative aspects, to be made precise soon, of $E^k (z)+P(z)$ which turn out to be very much different from that of $E^k$ and $P$.
\par
%
A point $a\in\widehat{\mathbb C}$ is called a singular value of an entire function  $f:\mathbb{C} \to \widehat{\mathbb{C}}=\mathbb{C} \cup \{\infty\}$ if at least one branch of $f^{-1}$ fails to be defined at $a$.  This is why a singular value of $f $ is also called as a  singularity of $f^{-1}$. This failure can occur broadly in three ways: $a$ is a critical value, an asymptotic value or a limit point of these values. 
\par A complex number $z_{0}$ is called a critical point of $f$ if  $f'(z_{0})=0$.  If $f(\infty) =\infty$  for an entire  function $f$, the point at $\infty$ is said to be a critical point whenever $z \mapsto \frac{1}{f(\frac{1}{z})}$ has a critical point at $0$.  A critical value of $f$ is the image of a critical point. An entire function is called transcendental if it has  an essential singularity at $\infty$. For a transcendental entire function $f$, if   $f(z) \to a$ as $z \to \infty$ along an unbounded curve then $a$ is called an asymptotic value of $f$.
 
 \par There is no critical point for any linear polynomial. The point at $\infty$ is a critical value as well as a critical point,  and is the only preimage of itself for each non-linear polynomial whereas $\infty$ is an asymptotic value for $E$. A point $a\in\mathbb{\widehat{C}}$ is said to be an omitted value of an entire   function $f: \mathbb{C} \to \widehat{\mathbb{C}}$ if $f(z)\neq a$ for any $z\in\mathbb{C}$. The point $0$ is clearly an omitted value for $E$ and it is an asymptotic value for $E$. These are some major differences between the exponential and a polynomial. 
  \par In order to describe the results of this article, we need to discuss omitted values in more details. An omitted value of a function is always its  asymptotic value (see \cite{Iversen 1914}), but the converse is not always true. Indeed, an omitted value is a special type of asymptotic value in the sense that each  component $D$ of $f^{-1}(N)$ for a sufficiently small neighborhood $N$ of an omitted value $a$ is unbounded  whereas there can be a bounded component of $f^{-1}(N)$ for a neighborhood $N$ of an asymptotic value. The connectivity (number of maximally connected open sets of $\widehat{\mathbb{C}} \setminus D$) of $D$ is also crucial. A detailed classification of singularities can be found in \cite{BergEreme 1995}.
  
\par Coming back to the exponential function, the point $0$ is an omitted value for $E$ and the preimage of each sufficiently small neighborhood of $0$ is  connected and more importantly simply connected. Here, by the full preimage of a set $N$ under a function $f$, we mean the set $\{z \in \mathbb{C}: f(z) \in N\}$. It is said that there is a logarithmic singularity of $E^{-1}$ lying over $0$. There is also a logarithmic singularity of $E^{-1}$ lying over $\infty$. Though this  has been a significant fact (including in the study of iteration of $E$), this is not always true for every entire function. A special type of omitted value is introduced by Chakra et al. in \cite{ChakraChakrabortyNayak 2016}, called Baker omitted value, over which the singularity is not logarithmic.
 \begin{definition}[Baker omitted value]
 An omitted value $a\in\mathbb{\widehat{C}}$ of an entire  function $f$ is said to be a Baker omitted value (in short, bov) if there is a disk $D$ with center at $a$ such that each component of the boundary of $f^{-1}(D)$ is bounded.
 \end{definition}

 If $f$ is a transcendental  entire function with   bov, then the bov can only be $\infty$. This is because, the bov of a function is the only asymptotic value (see Theorem 2.1, ~\cite{ChakraChakrabortyNayak 2016}) and $\infty$ is always an asymptotic value of a transcendental  entire function. It is further observed in Lemma 2.3,~ \cite{ChakraChakrabortyNayak 2016} that the full preimage of a sufficiently small neighborhood of $\infty$ is infinitely connected.  Now on wards, by saying an entire function has bov we mean that it has bov at $\infty$.
 
   \par  Recently, it is  proved that  $ {e^z} +cz^d$ has bov   for every non-zero complex number $c$ and every natural number $d$ (see  Remark 2.2,   \cite{GhoraNayakSahoo 2021}). This is probably the second instance of examples of entire maps with bov, the first being the function given by Baker in 1975 as an infinite product~\cite{Baker 1975}. The first result of this article is a generalization of  Remark 2.2,   \cite{GhoraNayakSahoo 2021}.   
    
\begin{theorem}\label{Main_thm}
For every non-constant polynomial $P$, the function $e^z+P(z)$ has  Baker omitted value.
\end{theorem}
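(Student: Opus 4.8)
Since $f(z)=e^z+P(z)$ is transcendental entire, the point $\infty$ is automatically an omitted value, and by Theorem~2.1 of \cite{ChakraChakrabortyNayak 2016} any bov must be $\infty$; so the entire task is to exhibit one disk $D=\{w:|w|>R\}\cup\{\infty\}$ for which every boundary component of $f^{-1}(D)$ is bounded. Writing $K_R=\{z:|f(z)|\le R\}$, which is exactly the complement of $f^{-1}(D)$, one has $\partial f^{-1}(D)=\partial K_R\subseteq K_R$ because $K_R$ is closed; hence every component of $\partial f^{-1}(D)$ is a connected subset of $K_R$ and lies inside a single component of $K_R$. The plan is therefore to prove that, for $R$ large, $K_R$ has no unbounded component.

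First I would localise $K_R$. If $|z|$ is large and $z\in K_R$, then from $e^{\operatorname{Re}z}=|e^z|$, the bounds $|P(z)|-R\le |e^z|\le |P(z)|+R$, and $|P(z)|\to\infty$, one gets $\bigl|\operatorname{Re}z-\log|P(z)|\bigr|\le \log 2$. Since $\log|P(z)|=d\log|z|+O(1)$ with $d=\deg P$, this forces $\operatorname{Re}z\to+\infty$ while $\operatorname{Re}z=O(\log|z|)$; in particular $|z|\sim|\operatorname{Im}z|$ and $\operatorname{Re}z=d\log|\operatorname{Im}z|+O(1)$. Thus, outside a large disk, $K_R$ is confined to a thin logarithmic strip $S$ around the curves $\operatorname{Re}z=\log|P(z)|$, and on $K_R$ one has $|\operatorname{Im}z|\to\infty$ whenever $|z|\to\infty$.

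Next I would produce horizontal barrier lines. On $S$ the quantity $\arg e^z=\operatorname{Im}z$ winds once around the circle as $\operatorname{Im}z$ increases by $2\pi$, whereas $\arg P(z)=d\arg z+\arg a_d+o(1)$ varies slowly and tends to the constant $\gamma_\infty=d\pi/2+\arg a_d$ as $\operatorname{Im}z\to+\infty$ along $S$, uniformly across the $O(1)$ width of $S$ since $\tfrac{d}{dx}\arg z=O(1/\operatorname{Im}z)$. Choosing in each interval $[2\pi n,2\pi(n+1))$ a height $y_n\equiv\gamma_\infty\pmod{2\pi}$, the arguments of $e^z$ and $P(z)$ differ by less than $\pi/2$ for every $z\in S$ with $\operatorname{Im}z=y_n$, so the cross term is nonnegative and $|f(z)|^2\ge|e^z|^2+|P(z)|^2>R^2$ there; off the strip $|f|>R$ by the localisation step. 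Hence the whole line $\operatorname{Im}z=y_n$ lies in $f^{-1}(D)$ for all large $n$, and a symmetric argument gives such lines with $y_n\to-\infty$.

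Finally I would conclude by connectedness. A component $U$ of $K_R$ is disjoint from every barrier line; if it met a line $\operatorname{Im}z=y_n$ from both sides, then its connected, hence interval-valued, projection to the imaginary axis would contain $y_n$, producing a point of $U$ on the line, a contradiction. Thus each component of $K_R$ is trapped between two consecutive barriers, so its imaginary part is bounded, and by the localisation its real part is then bounded too, making it bounded. Therefore $K_R$ has only bounded components, every boundary component of $f^{-1}(D)$ is bounded, and $\infty$ is a Baker omitted value. I expect the main obstacle to be the barrier step: controlling $\arg P(z)$ uniformly across the width of $S$ and selecting the heights $y_n$ so that $|f|>R$ holds along an entire horizontal line, rather than merely away from the zeros of $f$.
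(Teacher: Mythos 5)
Your proposal is correct, and it takes a genuinely different route from the paper. The paper never touches the sublevel set $K_R=\{z:|f(z)|\le R\}$ directly: it invokes the characterization (Lemma~\ref{unbded-curve}) that $\infty$ is the bov if and only if $f$ maps every unbounded curve to an unbounded set, and then runs a case analysis on where the unbounded part of the curve lives --- a left half-plane (where $P$ dominates the bounded $e^z$), a sector $|\arg z|<\alpha<\pi/2$ (where $e^{\Re z}$ beats $|\Re z|^d$), or a near-vertical region; in the last case it needs the sectorial injectivity of $P$ (Lemma~\ref{sectoral-univalence-polynomials}) to confine $P$ of that region to a narrow sector $S_P$, and then picks heights $\Im z_n=\Im z_0+2n\pi$ so that $e^{z_n}$ rides a fixed ray in $S_P$ while $-P(z_n)$ stays in $-S_P$. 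Your barrier construction exploits the same $2\pi i$-periodicity trick --- choosing heights where $\arg e^z$ is aligned with the limiting argument $d\pi/2+\arg a_d$ of $P$ --- but embeds it in a direct verification of the definition: localize $K_R$ to a logarithmic strip, cut the plane by full horizontal lines on which $|f|>R$, and conclude that every component of $K_R$, hence every boundary component of $f^{-1}(D)$, is trapped in a bounded cell. What you gain is self-containedness (no appeal to the unbounded-curve criterion, no polynomial injectivity lemma) plus explicit geometric information about $f^{-1}(D)$; what the paper's route buys is that the curve-by-curve criterion scales to $E^k+P$ in Theorem~\ref{secondthm}, where a clean description of the sublevel set would be much harder. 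The two points you flag as delicate both go through: the strip has width $O(1)$ while $\Im z\to\infty$, so $\arg z\to\pi/2$ and hence $\arg P(z)\to d\pi/2+\arg a_d$ uniformly on $S\cap\{\Im z=y_n\}$; and every point of the line $\{\Im z=y_n\}$ has modulus at least $y_n$, so off the strip the localisation itself already yields $|f|>R$, leaving no small-modulus portion of the line unaccounted for.
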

  
The entire function $e^z +P(z)$ is of order $1$. The order of an entire function is a quantification of the rate of growth of its maximum modulus (see ~\cite{asymptoticvalue-book} for a definition). The next result shows that there are also entire functions of infinite order with bov.
	\begin{theorem}\label{secondthm}
	Let $E(z)=e^z$ and $E^k(z)$ be the $k-$times composition of $E$ with itself where $k>1$. Then  for every non-constant polynomial $P$, the function $E^k(z)+P(z)$ has Baker omitted value. 
\end{theorem}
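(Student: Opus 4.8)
The plan is to verify the two defining features of a Baker omitted value directly for $f(z)=E^k(z)+P(z)$. First, $\infty$ is trivially an omitted value, since $f$ is entire and hence finite on $\mathbb{C}$. It remains to produce a disk $D$ about $\infty$, that is, a set $\{|w|>R\}$, for which every component of $\partial f^{-1}(D)=\partial\{|f|>R\}$ is bounded. Since every component of $\{|f|=R\}$ is contained in a component of the sub-level set $\{|f|\le R\}$, it suffices to exhibit one $R>0$ for which \emph{every} component of $\{z:|f(z)|\le R\}$ is bounded; having infinitely many of these bounded islands then also recovers the infinite connectivity of the preimage. This is the analogue, for $k>1$, of the picture already established for the single exponential in Theorem~\ref{Main_thm}.

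The heart of the argument is a growth estimate on the set where $f$ is small, obtained by peeling off the exponentials one at a time. Write $d=\deg P$ and fix $\rho$ so large that $|P(z)|\ge 2R$ whenever $|z|\ge\rho$. If $|z|\ge\rho$ and $|f(z)|\le R$, then $E^k(z)=-P(z)\bigl(1+O(R/|P(z)|)\bigr)=-P(z)(1+o(1))$, so $E^k(z)$ is large and nonzero. Taking a logarithm and choosing the branch closest to the lattice $2\pi i\mathbb{Z}$ gives $E^{k-1}(z)=\log|P(z)|+i\bigl(\pi+\arg P(z)+2\pi m_1\bigr)+o(1)$ for an integer $m_1$ that is locally constant on $\{|f|\le R\}\cap\{|z|\ge\rho\}$. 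Since $\arg P(z)$ is bounded, $E^{k-1}(z)$ has real part $\asymp\log|P(z)|\to\infty$ and bounded imaginary part. Feeding this into $E^{k-1}=e^{E^{k-2}}$ and iterating, an induction on $j$ shows $E^{k-j}(z)=\log^{(j)}|P(z)|\,(1+o(1))+i\,(2\pi m_j+o(1))$ with $m_j\in\mathbb{Z}$ locally constant, the imaginary parts becoming $o(1)$ after the first peel so that errors shrink down the tower. After $k$ steps this reaches $z=E^0(z)$ itself:
\[
\operatorname{Re} z=\log^{(k)}\!\bigl(|P(z)|\bigr)(1+o(1)),\qquad \operatorname{Im} z=2\pi m_k+o(1).
\]

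This estimate forces the conclusion. Because $\log^{(k)}(|P(z)|)=o(|z|^{1/2})$ while $\operatorname{Im} z$ stays within a bounded distance of $2\pi m_k$, any such solution with $|z|\ge\rho$ satisfies $|z|^2=\operatorname{Re}(z)^2+\operatorname{Im}(z)^2\le |z|+(2\pi|m_k|+1)^2$, so $z$ lies in a bounded neighbourhood of a point $z_m$ with $\operatorname{Im} z_m\approx 2\pi m$ and $\operatorname{Re} z_m\approx\log^{(k)}\bigl((2\pi|m|)^{d}\bigr)$. Distinct integers $m$ differ in height by at least $2\pi$, exceeding the diameter of these neighbourhoods, so $\{|f|\le R\}\cap\{|z|\ge\rho\}$ splits into pairwise separated bounded islands; together with the bounded central part inside $\{|z|<\rho\}$, every component of $\{|f|\le R\}$ is bounded, and hence $\infty$ is a Baker omitted value. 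The main obstacle is the middle step: carrying the error terms and the branch integers $m_1,\dots,m_k$ through the $k$-fold logarithmic peeling and proving the $m_j$ are constant on connected pieces. This bookkeeping is exactly where $k>1$ departs from Theorem~\ref{Main_thm}, since one must show that the wild geometry of the iterated exponential still pins $z$ into the narrow strips $\operatorname{Im} z\approx 2\pi m$ with only iterated-logarithmic real part.
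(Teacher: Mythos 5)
Your overall reduction is sound: since $f=E^k+P$ is entire, $\infty$ is omitted, and it would indeed suffice to exhibit one $R>0$ for which every component of $\{z:|f(z)|\le R\}$ is bounded. The proof breaks at the first ``peel'', and the break is not repairable bookkeeping. From $E^k(z)=-P(z)(1+o(1))$ you correctly get $\Re\bigl(E^{k-1}(z)\bigr)=\log|P(z)|+o(1)$, but the equation pins $\Im\bigl(E^{k-1}(z)\bigr)$ only modulo $2\pi$: you cannot ``choose the branch'', because $E^{k-1}(z)$ is a specific number determined by $z$, and the branch integer $m_1=m_1(z)$ is unbounded on $\{|f|\le R\}$. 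Your assertion that $E^{k-1}(z)$ has bounded imaginary part conflates ``bounded modulo $2\pi$'' with ``bounded'', and the localization it produces is actually false. Concretely, for $k=2$ the solutions of $e^{e^z}=c$ ($c\neq 0$ fixed) are $z=\mathrm{Log}\bigl(\mathrm{Log}\,c+2\pi i n\bigr)+2\pi i j$ over $n,j\in\mathbb{Z}$, which for large $|n|$ have real part $\approx\log(2\pi|n|)$ and imaginary part near $\pm\frac{\pi}{2}+2\pi j$; a routine contraction (or Rouch\'e) argument shows the perturbed equation $e^{e^z}=-P(z)$, i.e.\ $f(z)=0$, has zeros $z^*$ of the same shape. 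At these points $m_1\approx n$ is arbitrarily large, $\Im z^*$ sits near $\frac{\pi}{2}+2\pi j$ rather than near $2\pi\mathbb{Z}$, and $\Re z^*\approx\log(2\pi n)$ is unbounded and unrelated to $\log^{(k)}|P(z^*)|$. So $\{|f|\le R\}\supset f^{-1}(0)$ is not contained in your strips; and once $m_1$ is large, the next peel gives $\Re\bigl(E^{k-2}(z)\bigr)=\log|E^{k-1}(z)|\approx\log(2\pi|m_1|)$ instead of $\log^{(2)}|P(z)|$, so the induction down the tower collapses.

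The dichotomy you suppressed --- on the low set, the imaginary parts of the intermediate iterates $E^l(z)$ may be bounded \emph{or} unbounded --- is exactly what the paper's proof is organized around, and it avoids your difficulty by proving something weaker pointwise: using Lemma~\ref{unbded-curve}, it only needs, on each unbounded \emph{curve} $\gamma$, one unbounded sequence $\{z_n\}$ with $|f(z_n)|\to\infty$, rather than a global description of the sublevel set. When some $\Im\bigl(E^l(z_n)\bigr)$ is unbounded, connectedness of $\gamma$ lets one select points where $E^k(z_n)$ lies on a prescribed ray inside a narrow sector $S_P$ containing $P$'s image (via Lemma~\ref{sectoral-univalence-polynomials}), so $E^k(z_n)$ and $-P(z_n)$ lie in sectors that are reflections of each other through the origin and $|f(z_n)|\to\infty$; when all these imaginary parts stay bounded, the relevant cosines are bounded below and $|E^k(z_n)|$ grows like an iterated exponential of $\Re(z_n)$, beating $|P(z_n)|$. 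If you insist on your direct, definition-based route, you must additionally prove boundedness of the components of $\{|f|\le R\}$ through the points $z^*$ above, which in effect forces you back into precisely this kind of case analysis.
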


\par
For an entire function, the bov (that is always $\infty$) is always a limit point of critical values (see Lemma 2.5, ~\cite{GhoraNayakSahoo 2021} for a proof for meromorphic functions which also works for entire functions).
Therefore the set of (infinitely many) critical values of $E^k(z)+P(z)$ is unbounded. In other words, these functions are outside the well-known Eremenko-Lyubich class that consists of all entire functions for which the set of all finite singular values is bounded (see ~\cite{Sixsmith2018}).
The set of singular values of both $E^k$  and $P$ is finite. In fact, the only singular values of $E^k$ are $E^{k-1}(0)$ and $\infty$ (both are asymptotic values) whereas for $P$, the singular values are precisely its critical values that is at most $2 \deg(P)-2$ in number.  These facts stand in contrast with $E^k +P$ which has an unbounded set of singular values.
\par There is an one-one correspondence between the   meromorphic and entire functions having bov. More precisely, for a complex number $a$, if $f$ is an entire function having bov then $\frac{1}{f}+a$ is a meromorphic function with bov at $a$. Conversely, if $g$ is a  meromorphic function with bov $a$ then $\frac{1}{g-a}$ is an entire function having bov. It is important to note that the bov of a meromorphic function is always a finite complex number. Lemma 2.5 of \cite{GhoraNayakSahoo 2021} states that if a meromorphic function has a bov then the bov is a limit point of its critical values. This gives rise to a question: when the bov is the only limit point of critical values? Several results on the dynamics of meromorphic maps for which the bov is the only limit point of critical values are 
proved in  \cite{GhoraNayak 2021}. The following corollary supplies examples of such maps.
\begin{corollary}\label{coro 1}	For each non-constant polynomial $P$, the only limit point of the critical values of $e^z+P(z)$ is $\infty$ and consequently,
	$a$ is the only limit point of the critical values of $\frac{1}{e^z+P(z)}+a$ for every complex number $a$.
\end{corollary}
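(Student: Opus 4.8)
The plan is to reduce the whole statement to the elementary fact that a non-constant polynomial tends to $\infty$ at $\infty$, by first writing down the critical values of $f(z)=e^z+P(z)$ explicitly. Since $f'(z)=e^z+P'(z)$, a critical point $z_0$ satisfies $e^{z_0}=-P'(z_0)$, and substituting this relation into $f$ collapses the transcendental term: setting $Q:=P-P'$ one gets the critical value $f(z_0)=e^{z_0}+P(z_0)=P(z_0)-P'(z_0)=Q(z_0)$. Because $\deg P'=\deg P-1$, the polynomial $Q$ has the same degree as $P$ and is in particular non-constant. Thus every critical value of $f$ is the value of one fixed non-constant polynomial $Q$ evaluated at a critical point, and the problem becomes purely about where the critical points lie.

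Next I would argue that the critical points escape to $\infty$. They are exactly the zeros of the transcendental entire function $f'=e^z+P'$, which is not identically zero, so they form a discrete set with no finite accumulation point; moreover there are infinitely many of them, since by Theorem \ref{Main_thm} together with Lemma 2.5 of \cite{GhoraNayakSahoo 2021} the point $\infty$ is a limit point of the (hence infinitely many) critical values. Enumerating the critical points as $z_1,z_2,\dots$ we therefore have $|z_n|\to\infty$, and since $Q$ is a non-constant polynomial $|Q(z_n)|=|f(z_n)|\to\infty$. Consequently no finite complex number can be a limit point of the critical values, because a finite limit point would require infinitely many distinct critical points whose $Q$-values remain bounded, which is impossible; combined with the already-noted fact that $\infty$ is a limit point, this gives that $\infty$ is the only limit point of the critical values of $e^z+P(z)$.

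For the meromorphic function $g=\frac{1}{f}+a$ I would transfer this via the identity $g'=-f'/f^2$, so that the critical points of $g$ in $\mathbb{C}$ are precisely the zeros of $f'$ that are not zeros of $f$; the finitely many common zeros of $f$ and $f'$ (which are among the at most $\deg Q$ roots of $Q$) become poles of $g$ and can only contribute $\infty$ as a critical value. At a genuine critical point $z_0$ the critical value of $g$ is $g(z_0)=\frac{1}{f(z_0)}+a=\frac{1}{Q(z_0)}+a$, and since $|Q(z_n)|\to\infty$ we obtain $g(z_n)\to a$. Hence all finite critical values of $g$ cluster only at $a$, and together with the general fact that the bov $a$ is itself a limit point of the critical values, this shows that $a$ is the only limit point of the critical values of $\frac{1}{e^z+P(z)}+a$.

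The step I expect to require the most care is the bookkeeping in the meromorphic correspondence, namely checking that only finitely many critical points of $f$ can be zeros of $f$ and that these contribute nothing to the finite limit points, together with confirming that the critical points are genuinely infinite in number and accumulate only at $\infty$ so that $\infty$ is not vacuously a limit point. Beyond this, no analytic estimate is needed: once the algebraic simplification $f(z_0)=Q(z_0)$ is in hand, the escape of the critical values to $\infty$ is immediate, so the real content of the corollary lies entirely in that identity rather than in any hard analysis.
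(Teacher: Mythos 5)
Your proposal is correct and follows essentially the same route as the paper: both hinge on the identity $f(z_0)=P(z_0)-P'(z_0)$ at a critical point, the fact that the zeros of $f'=e^z+P'$ are infinite in number and accumulate only at $\infty$, and the observation that the polynomial $P-P'$ (your $Q$) is non-constant, with the meromorphic case handled by inverting these critical values. The only divergence is a minor one in an auxiliary step: the paper gets infinitely many critical points by applying Theorem~\ref{Main_thm} to $f'$ itself (with a separate remark for linear $P$) and then invoking Lemma~\ref{lemma-prelim}(2), whereas you deduce it from $\infty$ being a limit point of the critical values of $f$; both are valid, and your treatment of the multiple poles in the meromorphic case is in fact slightly more careful than the paper's.
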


A major observation on which the proofs of Theorems~\ref{Main_thm} and \ref{secondthm} rely is that an entire function has bov if and only if the image of each unbounded curve is unbounded under the function (see Theorem 2.2,~\cite{ChakraChakrabortyNayak 2016}). This fact also gives the following.
\begin{Remark}
	If the image of every unbounded curve under an entire function is unbounded then it remains true for $\lambda f$ for every non-zero $\lambda$. Therefore, it follows from Theorems~\ref{Main_thm} and ~\ref{secondthm} that,  for each non-constant polynomial $P $ and for each $k \geq 1$, the map $\lambda( E^{k}(z)+P(z))$ has  bov.
	\label{remark-maintheorem}
\end{Remark}
We shall require this remark  for presenting  Example~\ref{example-second}.
\par 
For describing some iterative aspects of $E^k +P$, we need few definitions.
For an entire function $f$, the set of points $z\in\mathbb{\widehat{C}}$ for which the sequence of iterates $\{f^n(z)\}_{n>0}$ is defined and forms a normal family is called the Fatou set of $f$ and is denoted by $\mathcal{F}(f)$. The Julia set, denoted by $\mathcal{J}(f)$ is the complement of the Fatou set in $\widehat{\mathbb{C}}$. It is well-known that the Fatou set is open and  the Julia set is a closed subset of $\widehat{\mathbb{C}}$.
 
  For  a Fatou component (maximally connected subset of the Fatou set) $U$, let $U_n$ denote the Fatou component containing $f^n(U)$.  A Fatou component  is called $p$-periodic if $p$ is the smallest natural number for which $U_p=U$.
    If $p=1$ then $U$ is called invariant. The Fatou component $U$ is called pre-periodic if $U$ is not periodic but there exists a $k>0$ such that $U_k$ is periodic. A periodic Fatou component  of an entire function is one of the following: an attracting domain, a parabolic domain, a Siegel disc  or a Baker domain. 
     A  Fatou component which is neither periodic nor pre-periodic is called a wandering domain.  For a background on the iteration of entire functions one can see e.g. \cite{Berg 1993}. A special type of wandering domain is our concern.
\begin{definition}[Baker wandering domain] 
	A wandering domain $W$ of an entire function is called Baker wandering if each $W_n$ is bounded and   there exists a natural number $N$ such that $W_{n+1}$ surrounds $W_n$ and $0$ for all $n\geq N$ and, dist$(W_n,0)\to\infty$ as $n\to\infty$.
	\label{BWD-defn}
\end{definition}
Here $W_{n+1}$ surrounds $W_n$ (or $0$) means a bounded component of $\widehat{\mathbb{C}} \setminus W_{n+1}$ contains $W_n$ (or $0$ respectively) and $dist(W_n, 0)$ denotes the Euclidean distance of $W_n$ from $0$. It is clear that  Baker wandering domains ultimately expand  in all the directions towards $\infty$. 
In 1984, Baker established that each multiply connected Fatou component of a transcendental entire function is a Baker wandering domain (see Theorem 3.1, \cite{Baker 1984}). The first such example is given as an infinite product  in the same paper. Further, each Baker wandering domain is not only bounded but all other Fatou components are bounded in the presence of a Baker wandering domain. 

It is proved in Theorem 2.3,~\cite{ChakraChakrabortyNayak 2016} that  if an entire function  has a Baker wandering domain then it has bov.  However, the presence of   bov does not always guarantee the existence of a Baker wandering domain. Three types of examples are provided in this article demonstrating this.
\par  In Example~\ref{example-first}, the function  $f_{2,\beta}(z)=E^2(z)+z-\beta$ is considered for $\beta<1$. The expansion property of Baker wandering domains is used to prove their non-existence.   Example~\ref{example-second} discusses  $f_\lambda(z)=\lambda e^z+z+\lambda $ for $0<\lambda<2$, and  the Fatou set is shown to be the union of infinitely many unbounded and simply connected attracting domains and their iterated preimages (here an iterated preimage of a Fatou component $U$ means a Fatou component $U'$ for which $U' _n =U$ for some $n >0$).  This discards the possibility of any Baker wandering domain for $f_\lambda$. The  function  $F_\lambda (z)=f_{\lambda}(z)+2\pi i$ is considered in Example~\ref{example-third} and its Fatou set is shown to contain wandering domains that are simply connected and unbounded leading to the non-existence of Baker wandering domains. It is also shown that these wandering domains are escaping.
\par The functions considered in Examples~\ref{example-second} and ~\ref{example-third} have no finite asymptotic value, each critical point is simple and each critical value has exactly one preimage that is a critical point. Such functions have been referred to as \textit{vanilla} functions by  Rempe et al. in  \cite{Rempe2017} who proved that for these functions,  there are infinitely many pairwise disjoint simply connected domains with connected preimages.   This is a significant observation by the authors and is intimately related to a conjecture:
\textit{The number of completely invariant domains of a transcendental entire function is at most one}.

For a  transcendental entire function $f$, two disjoint simply connected domains $U$ and $V$, is it true that at least one of $f^{-1}(U)$ and $f^{-1}(V)$ is disconnected ? In 1970, Baker proving that the answer to this question is \textit{yes}, claimed to settle the above mentioned conjecture affirmatively.
However, later in 2016 Julien Duval noticed a flaw in Baker's proof making the conjecture open again.   In this backdrop,  Rempe et al.  introduced vanilla functions and proved the aforementioned statement (see \cite{Rempe2017} for further details). Thus vanilla functions become candidates for which the conjecture may possibly be false. 
\par Functions of the form $E^k +P$  can be studied in detail for possible existence of Baker wandering domains and from the point of view of the conjecture on completely invariant domains.
 
\par
Section 2 discusses some preliminary results that are to be  used in the proofs later. The proofs of Theorems~\ref{Main_thm} and \ref{secondthm} are presented in Section 3. Section 4 discusses the examples.


\section{Preliminaries}

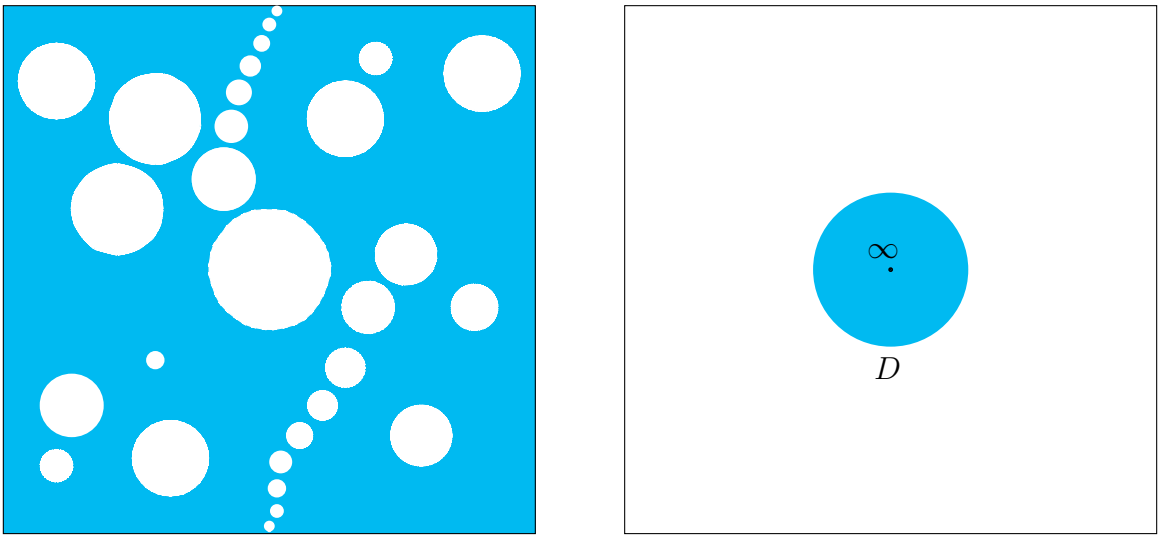
\begin{figure}[h!]
	\begin{center}
		\begin{multicols}{2}
			\begin{tikzpicture}
				\fill[cyan!80] (-3.5, 3.5) rectangle (3.5, -3.5);
				\draw [draw=black] (-3.5,3.5) rectangle (3.5,-3.5);
				\def\x{0}
				\def\y{0}
				
				\coordinate (c) at (0,0);
				\coordinate (d) at (1,2);
				\coordinate (e) at (-1.3,-2.5);
				\coordinate (f) at (-2,0.8);
				\coordinate (g) at (2.7,-0.5);
				\coordinate (h) at (-1.5,2);
				\coordinate (i) at (-2.8,2.5);
				\coordinate (j) at (1.4,2.8);
				\coordinate (k) at (2,-2.2);
				\coordinate (l) at (-2.8,-2.6);
				\coordinate (m) at (2.8,2.6);
				\coordinate (n) at (1.8,0.2);
				\coordinate (o) at (1.3,-0.5);
				\coordinate (p) at (1.0,-1.3);
				\coordinate (q) at (0.7,-1.8);
				\coordinate (r) at (0.4,-2.2);
				\filldraw[white,rounded corners=1mm] (c) \irregularcircle{0.8cm}{0.2mm};
				\filldraw[white,rounded corners=1mm] (d) \irregularcircle{0.5cm}{0.02mm};
				\filldraw[white,rounded corners=1mm] (e) \irregularcircle{0.5cm}{0.02mm};
				\filldraw[white,rounded corners=1mm] (f) \irregularcircle{0.6cm}{0.1mm};
				\filldraw[white,rounded corners=1mm] (g) \irregularcircle{0.3cm}{0.01mm};
				\filldraw[white,rounded corners=1mm] (h) \irregularcircle{0.6cm}{0.1mm};
				\filldraw[white,rounded corners=1mm] (i) \irregularcircle{0.5cm}{0.02mm};
				\filldraw[white,rounded corners=1mm] (j) \irregularcircle{0.2cm}{0.007mm};
				\filldraw[white,rounded corners=1mm] (k) \irregularcircle{0.4cm}{0.002mm};
				\filldraw[white,rounded corners=1mm] (l) \irregularcircle{0.2cm}{0.007mm};
				\filldraw[white,rounded corners=1mm] (m) \irregularcircle{0.5cm}{0.01mm};
				\filldraw[white,rounded corners=1mm] (n) \irregularcircle{0.4cm}{0.03mm};
				\filldraw[white,rounded corners=1mm] (o) \irregularcircle{0.34cm}{0.02mm};
				\filldraw[white,rounded corners=1mm] (p) \irregularcircle{0.25cm}{0.004mm};
				\filldraw[white,rounded corners=1mm] (q) \irregularcircle{0.18cm}{0.003mm};
				\filldraw[white,rounded corners=1mm] (r) \irregularcircle{0.15cm}{0.002mm};
				\filldraw[color=white, fill=white, very thick](-2.6,-1.8) circle (0.4);
				\filldraw[color=white, fill=white, very thick](0.1,-3.2) circle (0.07);
				\filldraw[color=white, fill=white, very thick](0.0,-3.4) circle (0.05);
				\filldraw[color=white, fill=white, very thick](0.0,-3.45) circle (0.01);
				\filldraw[color=white, fill=white, very thick](-1.5,-1.2) circle (0.10);
			 \filldraw[color=white, fill=white, very thick](0.1,-2.9) circle (0.10);
			 \filldraw[color=white, fill=white, very thick](0.15,-2.55) circle (0.13);			
				\filldraw[color=white, fill=white, very thick](-0.6,1.2) circle (0.4);
				\filldraw[color=white, fill=white, very thick](-0.6,1.2) circle (0.4);
				\filldraw[color=white, fill=white, very thick](-0.5,1.9) circle (0.2);
			    \filldraw[color=white, fill=white, very thick](-0.40,2.35) circle (0.15);
			   \filldraw[color=white, fill=white, very thick](-0.25,2.7) circle (0.12); 
			   \filldraw[color=white, fill=white, very thick](-0.10,3.0) circle (0.09);
			   \filldraw[color=white, fill=white, very thick](0.0,3.25) circle (0.07);
			   \filldraw[color=white, fill=white, very thick](0.10,3.43) circle (0.05);	

			\end{tikzpicture}
			\begin{tikzpicture}
				\fill[white] (-3.5, 3.5) rectangle (3.5, -3.5);
				\draw [draw=black] (-3.5,3.5) rectangle (3.5,-3.5);
				\def\x{0}
				\def\y{0}
				\filldraw[color=cyan!80, fill=cyan!80, very thick](0,0) circle (1);
				\filldraw[color=black, fill=black, very thick](0,0) circle (0.01);
				\node[anchor = south] at (\x-.04, \y-1.6) {$D$};
				\node[anchor = south] at (\x-0.1, \y+0.01) {$\infty$};
			\end{tikzpicture}
		\end{multicols}
	\end{center}
	\begin{center}
\caption{{\label{bov} 
The right hand side figure shows a neighborhood $D$ of $\infty$ and its preimage  under an entire function with bov is given in blue on the left hand side. }}
	\end{center}
\end{figure}
Before we list some useful properties of an entire function  with  bov, which were obtained in \cite{ChakraChakrabortyNayak 2016}, we present a possible image demonstrating the full preimage of a neighborhood of bov under an entire function in Figure~\ref{bov}. 
 \begin{lemma}
	If $f$ is an entire function with bov then, 
	\begin{enumerate}
		\item the bov is $\infty$ and it is the only asymptotic value of $f$, and 
		\item every finite point has infinitely many preimages under $f$.
	\end{enumerate}	
\label{lemma-prelim}	
\end{lemma}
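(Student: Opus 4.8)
I would prove the two parts separately, since they are of different flavour, and throughout I take $f$ to be transcendental --- the setting throughout the paper, and the only case in which part~(2) can hold, as a non-constant polynomial assumes every finite value only finitely often.

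For part~(1) the plan is simply to combine the quoted facts. A Baker omitted value is, in particular, an omitted value, so by Iversen's theorem it is an asymptotic value of $f$; and by Theorem~2.1 of \cite{ChakraChakrabortyNayak 2016} it is the \emph{only} asymptotic value of $f$. Since $\infty$ is always an asymptotic value of a transcendental entire function, uniqueness forces the bov to coincide with $\infty$. This establishes both claims in~(1) at once.

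For part~(2) I would argue geometrically from Lemma~2.3 of \cite{ChakraChakrabortyNayak 2016}. Fix a finite $w_0$ and pick $R>|w_0|$ for which $\{|f|=R\}$ is a union of smooth curves (all but countably many $R$ will do). The set $\{|f|>R\}$ is the full preimage of a neighbourhood of $\infty$, hence infinitely connected, and because $f$ has a bov none of the complementary ``islands'' is unbounded; thus $\{|f|<R\}$ has infinitely many bounded components. On each such component $K$ one has $|f|\to R$ as $z\to\partial K$, so $f|_K\colon K\to\mathbb{D}_R:=\{|w|<R\}$ is proper; being proper, holomorphic and non-constant it is a branched covering of $\mathbb{D}_R$ of some degree $d\ge 1$, and in particular surjective. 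Hence $w_0\in f(K)$, so each of the infinitely many components $K$ contributes at least one solution of $f(z)=w_0$, giving infinitely many preimages.

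The step I expect to demand the most care is the surjectivity of $f|_K$ onto the whole disk $\mathbb{D}_R$: one must verify properness (that $\{z\in K:\,|f(z)|\le\rho\}$ is compact for every $\rho<R$) and then invoke that a proper non-constant holomorphic map has open and closed, hence full, image in the connected target. One must also check that infinite connectivity really yields infinitely many \emph{bounded} islands for the chosen $R$, which is exactly where the bov hypothesis (no unbounded boundary component) is used. Part~(1), by contrast, is immediate once the cited theorems are granted; and as a shortcut for~(2) I could instead invoke Picard's great theorem, since by~(1) and Iversen the function has no finite omitted value and therefore attains every finite value infinitely often.
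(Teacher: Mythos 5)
The paper does not actually prove this lemma: it is presented as a list of ``useful properties \ldots obtained in \cite{ChakraChakrabortyNayak 2016}'', so there is no in-paper argument to compare against beyond the informal justification of part (1) given in the introduction. Your proof of part (1) reproduces exactly that justification (Iversen's theorem, plus Theorem 2.1 of \cite{ChakraChakrabortyNayak 2016} for uniqueness of the asymptotic value, plus the fact that $\infty$ is an asymptotic value of every transcendental entire function), and it is correct. Your main argument for part (2) --- infinitely many bounded components of $\{z:|f(z)|<R\}$, each mapped properly, hence surjectively, onto the disk $\{w:|w|<R\}$ --- is sound and self-contained; the properness of $f$ on each island and the boundedness of the islands are indeed the two points requiring care, and the bov hypothesis (via Lemma~\ref{unbded-curve}: every unbounded curve has unbounded image) is precisely what rules out an unbounded island. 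One caveat: your proposed shortcut via Picard's great theorem is not quite right as stated. Picard permits one finite value to be attained only finitely often without being omitted, so ``no finite omitted value'' does not by itself yield ``every finite value is attained infinitely often''. To repair it you would need the stronger form of Iversen's theorem, namely that a value attained only finitely often by a transcendental entire function is an asymptotic value, which then contradicts part (1). Since your primary geometric argument does not rely on this shortcut, the proof stands.
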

Now we give some lemmas which will be required to prove Theorems~\ref{Main_thm} and \ref{secondthm}. The first is a necessary and sufficient condition for the existence of bov (Theorem 2.2, \cite{ChakraChakrabortyNayak 2016}), whereas the next two describe some useful properties of polynomials and are proved in this article.
\begin{lemma}[]
	Let $f$ be an entire function. Then  $\infty$ is the bov of $f$ if and only if $f(\gamma)$ is unbounded for each unbounded curve $\gamma$.
	\label{unbded-curve} 
\end{lemma}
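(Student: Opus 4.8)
The plan is to recast both conditions as statements about the sub- and super-level sets of $|f|$ and then to prove the nontrivial implication by the maximum principle. First note that since $f$ is entire it never takes the value $\infty$ on $\mathbb{C}$, so $\infty$ is automatically an omitted value, and the only content of ``$\infty$ is the bov'' is the existence of a neighbourhood $D=\{\,w:|w|>R_0\,\}\cup\{\infty\}$ of $\infty$ for which every component of $\partial f^{-1}(D)=\{\,z:|f(z)|=R_0\,\}$ is bounded. I would record at the outset the elementary maximum-principle fact that no component of $\{\,|f|>R\,\}$ can be bounded (a bounded such component would be an open set on which $|f|>R$ while $|f|=R$ on its boundary, violating the maximum modulus principle); hence the only complementary pieces that can fail to be bounded are the ``islands'' $\{\,|f|\le R\,\}$, and the bov condition is exactly: \emph{for some $R_0$, every component of $\{\,|f|\le R_0\,\}$ is bounded}. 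Dually, ``$f(\gamma)$ is unbounded for every unbounded curve $\gamma$'' says precisely that there is \emph{no} unbounded curve on which $|f|$ stays bounded, i.e. \emph{for every $M$, every component of $\{\,|f|\le M\,\}$ is bounded}. So the lemma reduces to showing that this sublevel condition for a single value $R_0$ is equivalent to its holding for all $M$.

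The easy direction is ``curve condition $\Rightarrow$ bov'', which I would prove by contraposition. If $\infty$ is not a bov, then $f^{-1}(D)$ has an unbounded boundary component $C$. Since $C\subseteq\{\,|f|=R_0\,\}$ is an unbounded connected piece of an analytic level set, it contains an unbounded arc $\gamma$, and $f(\gamma)\subseteq\{\,|w|=R_0\,\}$ is bounded; this is an unbounded curve with bounded image, contradicting the hypothesis. (Away from the locally finitely many critical points the level set is a $1$-manifold, so an unbounded component genuinely contains an unbounded arc.)

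The hard direction is ``bov $\Rightarrow$ curve condition'', again by contraposition. Suppose $\gamma$ is unbounded with $f(\gamma)\subseteq\{\,|w|\le M\,\}$; enlarging $M$ we may assume $M\ge R_0$ (if $M\le R_0$ the connected set $\gamma$ would lie in a single bounded island and could not be unbounded). Using that the islands at level $R_0$ are bounded, I would split $\gamma$ into its pieces inside $\{\,|f|\le R_0\,\}$ (which lie in bounded islands) and its pieces in the annular preimage $\{\,R_0<|f|\le M\,\}$, and thereby produce an unbounded curve lying in a single unbounded component $W$ of $\{\,|f|>R_0\,\}$ along which $|f|$ stays $\le M$. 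The tool for a contradiction is the maximum principle in the form: on such a $W$ the function $u=\log(|f|/R_0)$ is harmonic, nonnegative, and vanishes on $\partial W$; if $f$ were bounded on all of $W$, then $u$ would be a bounded harmonic function on an unbounded domain with non-polar complement, vanishing at every finite boundary point, and since the single point $\infty$ carries no harmonic measure this forces $u\equiv 0$, i.e. $|f|\equiv R_0$ on $W$, which is absurd. Thus $f$ must be unbounded on every unbounded component of $\{\,|f|>R_0\,\}$.

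\textbf{The main obstacle} is precisely the gap between ``$f$ is bounded \emph{along the curve} in $W$'' and ``$f$ is bounded \emph{on all of} $W$'': the harmonic argument kills the latter, but the bad curve a priori gives only the former, since it could in principle hug the infinitely many bounded islands accumulating at $\infty$ while keeping $|f|$ inside the bounded annulus $(R_0,M]$. Ruling this out is exactly the statement that boundedness of the complementary islands propagates from the single threshold $R_0$ to every larger threshold $M$. I expect to resolve it by applying the maximum principle (or harmonic-measure estimates) on the subdomains of $W$ cut out by the curve $\gamma$ itself, controlling $u$ on the two sides of $\gamma$; alternatively, one can phrase the singularity of $f^{-1}$ over $\infty$ as a direct, non-logarithmic singularity and invoke Iversen-type results. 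This propagation step, rather than either contraposition, is where the substance of the lemma lies.
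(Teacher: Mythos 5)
You should first note that the paper itself contains no proof of this lemma: it is quoted, immediately before its statement, as Theorem 2.2 of \cite{ChakraChakrabortyNayak 2016}, so there is no in-paper argument to compare against and your proposal must stand on its own. Part of it does: your reformulation of the curve condition as ``every component of $\{|f|\le M\}$ is bounded for every $M$'' and your proof that the curve condition implies bov (an unbounded component of the level set $\{|f|=R_0\}$ is itself an unbounded curve with bounded image) are correct. The fatal problem is the other direction. The ``propagation step'' that you yourself isolate as carrying all the substance --- that boundedness of all components of $\{|f|\le R_0\}$ for a \emph{single} $R_0$ forces the same for every $M$ --- is not merely unproven in your sketch; it is false. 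Take $f(z)=\sin(\pi z)$, for which $|f(x+iy)|^2=\sin^2(\pi x)+\sinh^2(\pi y)$. With $R_0=\tfrac12$, the set $\{|f|\le\tfrac12\}$ is contained in the disjoint union of the rectangles $[n-\tfrac16,\,n+\tfrac16]\times[-s,s]$, $n\in\mathbb{Z}$, where $\sinh(\pi s)=\tfrac12$; hence every component of $\{|f|\le\tfrac12\}$ (equivalently, of $\partial f^{-1}(D)$ for $D=\{|w|>\tfrac12\}\cup\{\infty\}$) is bounded. Yet the real axis is an unbounded curve on which $|f|\le 1$, so your all-level conclusion fails already at $M=1$. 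Consequently no maximum-principle, harmonic-measure, or Iversen-type argument can close your gap: the statement you reduced the lemma to is simply not true.

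The example also shows that the lemma is false under the paper's literal definition of bov (``there \emph{is} a disk $D$ such that each component of $\partial f^{-1}(D)$ is bounded''), since $\sin(\pi z)$ satisfies that definition but not the curve condition; the equivalence can only hold if the defining property is required of \emph{every} sufficiently small disk centered at $\infty$, i.e.\ of every level $R\ge R^{*}$, and that is the reading you must adopt. Under it, your propagation step evaporates (it is built into the hypothesis), and the real content becomes a step your proposal also elides --- the unproved ``hence'' by which you identify bounded components of the level set $\{|f|=R\}$ with bounded components of the sublevel set $\{|f|\le R\}$. Concretely, one must show: an unbounded component $K$ of $\{|f|\le M\}$ forces an unbounded component of $\{|f|=M\}$ at the \emph{same} level $M$. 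This is provable, but by plane topology rather than by the tools you list: by the maximum principle every component $V$ of $\mathbb{C}\setminus K$ is unbounded; $\widehat{\mathbb{C}}\setminus V$ is connected (here unboundedness of $K$ is used), so $V$ is simply connected in $\widehat{\mathbb{C}}$ and, by unicoherence of the sphere, $\partial V\cup\{\infty\}$ is a continuum; the boundary bumping lemma then shows that every component of $\partial V\subseteq\partial K\subseteq\{|f|=M\}$ has $\infty$ in its closure, i.e.\ is unbounded. Given an unbounded curve $\gamma$ with $|f|\le M$ on $\gamma$, one applies this at the level $\max(M,R^{*})$ and contradicts the (correctly quantified) bov hypothesis. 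So your easy direction stands, but the hard direction needs both the corrected reading of the definition and an argument of a different kind from the one you propose.
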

\begin{lemma}
	If 	$P(z)=a_{0}+a_{1}z +\cdot\cdot\cdot+a_{d}{z }^d$, where $a_0,a_1,\ldots,a_d$ are complex numbers and $a_d \neq 0$ then for each $z$ with sufficiently large modulus, 
	\begin{enumerate}
		\item $|P(z )| \geq  
		{\frac{1}{d} {\sum_{i=0}^{{d-1}}{|a_{i}||z |^{i}}}}$, and 
		\item  
		$|P(z)|\leq \left({\sum_{i=0}^{d}{|a_{i}|}}\right) |z |^d $.
	\end{enumerate}
	\label{poly-inequality}
\end{lemma}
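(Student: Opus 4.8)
Both inequalities are elementary consequences of the triangle inequality, and the real content is only in tracking how large $|z|$ must be taken; I expect no genuine obstacle, just careful bookkeeping. The plan is to dispose of the upper bound (2) first, since it is the easier of the two. Starting from the ordinary triangle inequality $|P(z)| \le \sum_{i=0}^{d}|a_i||z|^i$, I would observe that as soon as $|z|\ge 1$ one has $|z|^i \le |z|^d$ for every index $0\le i\le d$. Replacing each $|z|^i$ by $|z|^d$ and factoring it out yields $|P(z)| \le \left(\sum_{i=0}^{d}|a_i|\right)|z|^d$ at once, so this half needs nothing beyond the mild hypothesis $|z|\ge 1$.

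For the lower bound (1) the idea is to isolate the leading term and absorb all lower-order contributions into it. I would apply the reverse triangle inequality to write $|P(z)| \ge |a_d||z|^d - \sum_{i=0}^{d-1}|a_i||z|^i$, so that it suffices to establish
\[
|a_d||z|^d \;\ge\; \frac{d+1}{d}\sum_{i=0}^{d-1}|a_i||z|^i ,
\]
which is exactly what is needed after adding $\sum_{i=0}^{d-1}|a_i||z|^i$ to both sides of the desired estimate. Bounding the tail crudely by $\sum_{i=0}^{d-1}|a_i||z|^i \le \left(\sum_{i=0}^{d-1}|a_i|\right)|z|^{d-1}$ (again valid once $|z|\ge 1$) and dividing through by $|z|^{d-1}$, this reduces the whole problem to the single linear condition
\[
|z| \;\ge\; \frac{d+1}{d}\cdot\frac{\sum_{i=0}^{d-1}|a_i|}{|a_d|},
\]
which holds for all sufficiently large $|z|$ because $a_d\neq 0$.

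The only point requiring a word of care is the choice of threshold for $|z|$: I would simply require $|z|$ to exceed the maximum of $1$ and the bound displayed above, so that the $|z|\ge 1$ requirement used in both parts and the linear condition needed for (1) are met simultaneously. The generous factor $\tfrac1d$ on the right-hand side of (1) leaves ample room in this estimate, so no delicate optimization of constants is necessary and the argument is purely a matter of organizing the triangle-inequality bounds.
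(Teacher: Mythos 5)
Your proof is correct and follows essentially the same route as the paper: part (2) is identical, and for part (1) both arguments reduce, via the reverse triangle inequality, to showing $|a_d||z|^d \ge \tfrac{d+1}{d}\sum_{i=0}^{d-1}|a_i||z|^i$ for large $|z|$. The only (immaterial) difference is that the paper verifies this by the term-by-term bound $|a_d||z|^d \ge (d+1)|a_i||z|^i$ and sums, while you bound the whole tail by $\bigl(\sum_{i=0}^{d-1}|a_i|\bigr)|z|^{d-1}$ to extract an explicit linear threshold on $|z|$.
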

\begin{proof}
	\begin{enumerate}
\item  For each $z$ with sufficiently large modulus, $|a_{d}||z |^d\geq (d+1)|a_{i}||z |^{i}$ for each $i=0,1,\ldots,{d-1}$.
This gives that  {$d|a_{d}||z |^d\geq (d+1)\sum_{i=0}^{{d-1}}{|a_{i}||z |^{i}}$} and consequently, $|a_{d}||z |^d\geq(1+\frac{1}{d})\sum_{i=0}^{{d-1}}{|a_{i}||z |^{i}}$. Now, for all such $z$, we have $$
|P(z )| \geq  {|a_{d}||z |^d}-\sum_{i=0}^{{d-1}}{|a_{i}||z |^{i}}\geq {\frac{1}{d} {\sum_{i=0}^{{d-1}}{|a_{i}||z |^{i}}}}.		$$
\item For each $z$ with sufficiently large modulus, $|z|>1$ and we have
$|z |^i\leq|z |^d$ for each  $i=0,1,2,\ldots ,d-1$. Hence 	$ |P(z)| \leq  \sum_{i=0}^{d}{|a_{i}||z |^{i}}\leq \left({\sum_{i=0}^{d}{|a_{i}|}}\right)|z |^d $.
\end{enumerate} \end{proof}
In the next lemma and hereafter, let $Arg(z)$ denote the principal argument of a non-zero complex number $z$, unless stated otherwise. 
\begin{lemma} 
	If a polynomial $P$ of degree $d $ has no critical point in the right half-plane then it is injective in the sector $\{z \in \mathbb{C}: -\frac{\pi}{d} < Arg(z) < \frac{\pi}{d}\}$. 
	Consequently, for each $\beta \in (\frac{\pi}{2}-\frac{\pi}{2d} , \frac{\pi}{2})$, there is an  $\widetilde{M}>0$ such that $P$ is injective in an open set containing  $\{z: \beta \leq Arg(z) \leq \frac{\pi}{2}~\mbox{and}~ |z| \geq  \widetilde{M}\}$.
\label{sectoral-univalence-polynomials}	 
\end{lemma}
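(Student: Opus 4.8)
The plan is to dispose of the degenerate case $d=1$ at once (a linear polynomial is injective on all of $\mathbb{C}$) and assume $d\ge 2$ throughout, so that $S=\{z:-\tfrac{\pi}{d}<Arg(z)<\tfrac{\pi}{d}\}$ has angular width $\tfrac{2\pi}{d}\le\pi$ and is a convex cone. Writing $P'(z)=d\,a_d\prod_{j=1}^{d-1}(z-c_j)$, the hypothesis says every critical point $c_j$ lies in the closed left half-plane, so each $-c_j$ lies in the closed right half-plane and $Arg(-c_j)\in[-\tfrac{\pi}{2},\tfrac{\pi}{2}]$. The elementary tool I would isolate first is a monotonicity fact: along any ray $r\mapsto re^{i\theta}$ with $|\theta|\le\tfrac{\pi}{d}$, the function $r\mapsto Arg(re^{i\theta}-c_j)$ is strictly monotone, since its derivative equals $\mathrm{Im}\big(e^{i\theta}\overline{(-c_j)}\big)/|re^{i\theta}-c_j|^2$, whose sign is independent of $r$; moreover it runs from $Arg(-c_j)$ at $r=0$ to the limit $\theta$ as $r\to\infty$.

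For injectivity on $S$ I would argue by a boundary-correspondence scheme rather than a direct estimate, because the latter cannot work: even for the model map $z\mapsto z^d$ the quantity $Arg\big((z^d)'\big)=(d-1)Arg(z)$ already sweeps an interval of length $\tfrac{2(d-1)\pi}{d}>\pi$ over $S$ once $d\ge 3$, so no Noshiro--Warschawski / half-plane condition for $P'$ can hold on $S$. Instead, for large $R$ consider the truncated sector $\Omega_R=S\cap\{|z|<R\}$. I would show that for all large $R$ the map $P$ sends $\partial\Omega_R$ homeomorphically onto a Jordan curve; the boundary correspondence theorem (a form of the argument principle) then makes $P$ a bijection of $\Omega_R$ onto the enclosed Jordan domain, and letting $R\to\infty$ yields injectivity on $S$. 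On the arc $|z|=R$ this is clear for large $R$ from $P(z)=a_dz^d\big(1+o(1)\big)$ and the injectivity of $z\mapsto z^d$ on $S$; on each bounding ray $Arg(z)=\pm\tfrac{\pi}{d}$ I would use the monotonicity above to produce a real-linear functional of $P$ that is monotone along the ray, so that $P$ maps it to a simple arc, and then check that the three boundary images meet only at their common endpoints. Equivalently, one computes the winding number of $P(\partial\Omega_R)$ about an interior value: the arc contributes $+2\pi$, and the hypothesis must force the two rays to contribute $0$ net.

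The main obstacle is exactly this boundary-ray control. Although each factor $Arg(z-c_j)$ is monotone along a ray, the sum $Arg\,P'=Arg(d a_d)+\sum_jArg(z-c_j)$, and hence $Arg\,P$, need not be monotone and may have total variation exceeding $\pi$. The point of the hypothesis is that all $c_j$ sit on the \emph{same} side of the imaginary axis, so the monotone pieces cohere and prevent the boundary image from winding a second time; I expect the heart of the proof to be a quantitative version of this coherence. That the bound is sharp is visible in borderline cases such as $P(z)=(z-iM)^d$, where the solutions of $P(z_1)=P(z_2)$ with $z_1\neq z_2$ are pushed exactly onto $\partial S$, confirming both that the hypothesis cannot be relaxed beyond the closed left half-plane and that injectivity can fail on the closure of $S$.

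For the ``consequently'' clause I would not re-run the global argument but exploit that the target set is a \emph{thin, far} sector. Given $\beta\in(\tfrac{\pi}{2}-\tfrac{\pi}{2d},\tfrac{\pi}{2})$, the closed sector $\{\beta\le Arg(z)\le\tfrac{\pi}{2}\}$ has angular width $\tfrac{\pi}{2}-\beta<\tfrac{\pi}{2d}$; enclose it in a slightly larger open sector $\Sigma$ of width still below $\tfrac{\pi}{2d}$. Using $P'(z)=d a_d z^{d-1}\big(1+o(1)\big)$ and Lemma~\ref{poly-inequality} to bound the lower-order terms, for $|z|$ large $Arg\,P'(z)$ differs from $Arg(d a_d)+(d-1)Arg(z)$ by an arbitrarily small amount, so over $\Sigma$ it varies by at most about $(d-1)\cdot\tfrac{\pi}{2d}<\tfrac{\pi}{2}$ plus a negligible term; hence there is a fixed $\psi$ with $\mathrm{Re}\big(e^{i\psi}P'(z)\big)>0$ throughout $\Sigma\cap\{|z|\ge\widetilde M\}$ for a suitable $\widetilde M$ chosen beyond all (finitely many, bounded) critical points. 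Since $\Sigma$ is a convex cone and the sector is thin, the segment joining any two points of $\Sigma\cap\{|z|\ge\widetilde M\}$ stays in that region, so $P(z_2)-P(z_1)=(z_2-z_1)\int_0^1P'\big(z_1+t(z_2-z_1)\big)\,dt\neq 0$ because the integrand remains in the half-plane $\{\mathrm{Re}(e^{i\psi}w)>0\}$. This yields injectivity on an open set containing the required region, the slack between $\tfrac{\pi}{2d}$ and $\tfrac{2\pi}{d}$ being precisely what leaves room for the perturbation.
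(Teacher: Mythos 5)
Your write-up has two halves of very different status. The first assertion --- injectivity of $P$ on $\{z : -\frac{\pi}{d} < Arg(z) < \frac{\pi}{d}\}$ when no critical point lies in the right half-plane --- is exactly the part you do not establish. Your boundary-correspondence plan (truncate to $\Omega_R$, show $P$ maps $\partial\Omega_R$ injectively onto a Jordan curve, apply the argument principle, let $R\to\infty$) is a reasonable skeleton, and your observation that no Noshiro--Warschawski condition can hold on the full sector is correct; but the decisive step --- controlling the images of the two bounding rays (and of the endpoints of the circular arc, which under $z\mapsto z^d$ already collide) so that the boundary image winds exactly once --- is precisely what you leave as ``I expect the heart of the proof to be a quantitative version of this coherence.'' A proof whose acknowledged heart is missing is a gap. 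For comparison, the paper does not prove this statement at all: it quotes it as Theorem 5.7.6 of Sheil-Small's \emph{Complex Polynomials} \cite{complex-polynomials}. So the two honest options were to cite that theorem, as the paper does, or to supply the missing ray analysis; you do neither.

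The ``consequently'' clause you prove by a genuinely different and essentially correct route. The paper derives it from the first part: setting $R=\max\{|z| : P'(z)=0\}$, it precomposes with $g(z)=ze^{-i\pi/2}-R$, applies the first part to $Q=P\circ g^{-1}$ (whose critical points automatically avoid the right half-plane), and concludes that $P$ is injective on the sector of opening $\frac{2\pi}{d}$ with vertex $iR$ about the vertical direction, which contains $\{z : \beta\le Arg(z)\le\frac{\pi}{2},\ |z|\ge\widetilde{M}\}$ for a suitable $\widetilde{M}$. Your argument instead never invokes the first part, nor in fact the hypothesis on critical points: on a sector $\Sigma$ of width below $\frac{\pi}{2d}$, $Arg\,P'(z)=Arg(d\,a_d)+(d-1)Arg(z)+o(1)$ varies by less than $\pi$ for large $|z|$, so $P'$ takes values in an open half-plane and the chord integral is nonzero. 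One slip: $\Sigma\cap\{|z|\ge\widetilde{M}\}$ is \emph{not} convex --- a segment joining two points of modulus $\widetilde{M}$ on the two edges of $\Sigma$ dips strictly below modulus $\widetilde{M}$ (down to $\widetilde{M}$ times the cosine of half the opening) --- so, as written, your integrand can leave the region where you proved $\Re\big(e^{i\psi}P'\big)>0$. The repair is immediate: establish the half-plane condition for all $|z|$ above a slightly smaller radius, and use only that segments stay in the convex cone $\Sigma$ with modulus bounded below by that radius. Modulo that cosmetic fix, your second half is sound, is more self-contained than the paper's, and shows the far-sector statement needs no hypothesis on the critical points at all; the paper's version, by contrast, actually delivers what the word ``consequently'' promises, but only because it can lean on the cited theorem.
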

\begin{proof}
	If $d=1$ then $P$ is injective in the whole complex plane and there is nothing to prove.
	\par 
	Let $d \geq 2$.
	That $P$ is injective in $\{z \in \mathbb{C}: -\frac{\pi}{d} < Arg(z) < \frac{\pi}{d}\}$ whenever  $P$ has no critical point in the right half-plane follows from Theorem 5.7.6~\cite{complex-polynomials}. The rest part of this lemma is to be proved using this fact.
	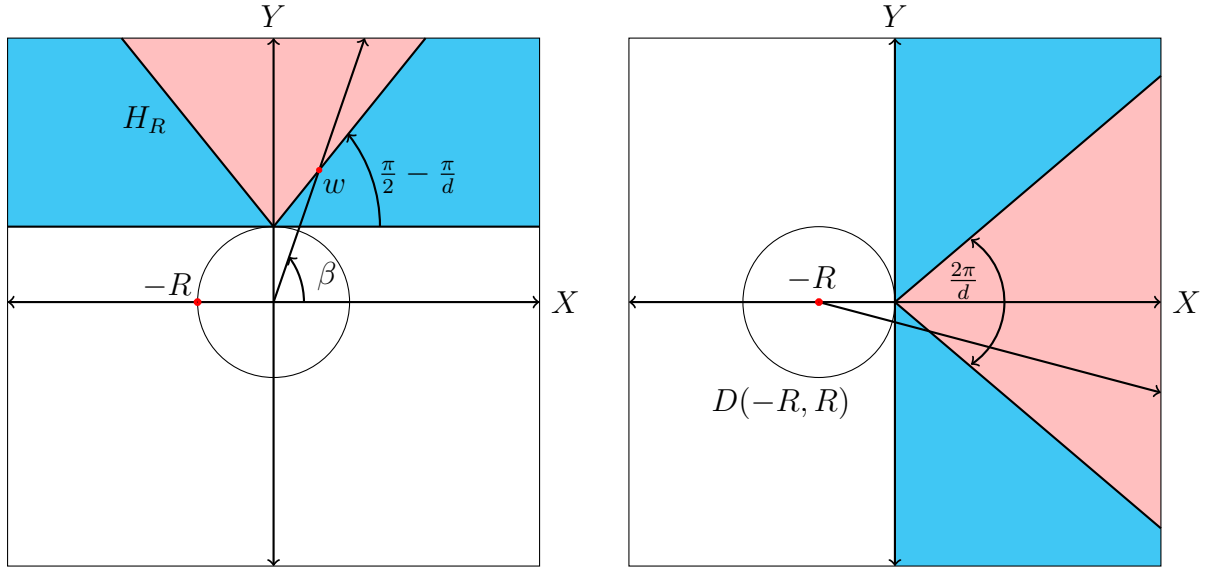
\begin{figure}[h!]
		\begin{center}
			\begin{multicols}{2}

				\begin{tikzpicture}
				\fill[cyan!60] (-3.5, 1) rectangle (3.5, 3.5);
				\fill[white] (-3.5, 1) rectangle (3.5, -3.5);
				\fill[pink] (0,1) -- (2,3.5) -- (-2,3.5) -- cycle;
				\draw [draw=black] (-3.5,3.5) rectangle (3.5,-3.5);
				\draw[<->,thick] (-3.5,0)--(3.5,0) node[right]{$X$};
				\draw[,-,thick] (-3.5,1)--(3.5,1) node[right]{};
				\draw[,-,thick] (0,1)--(2,3.5) node[right]{};
				\draw[,-,thick] (0,1)--(-2,3.5) node[right]{};
				\draw[<->,thick] (0,-3.5)--(0,3.5) node[above]{$Y$};
				\draw[->,thick] (0,0)--(1.2,3.5) node[right]{};
				\draw[thick, ->] (0.4,0) arc (0:37:1);
				\def\x{0}
				\def\y{0}
				\draw (\x, \y) circle (1);
				\node[anchor = south] at (\x-1.4, \y-0.09) {$-R$};
				\node[anchor = south] at (\x+0.8, \y+1.3) {$w$};
				\node[anchor = south] at (\x-1.7, \y+2.1) {$H_R$};
				\node[anchor = south] at (\x+0.7, \y+0.01) {$\beta$};
				\node[anchor = south] at (\x+1.9, \y+1.3) {$\frac{\pi}{2}-\frac{\pi}{d}$};
				\draw[thick, ->] (1.4,1.0) arc (0:38:2);
				\filldraw[color=red, fill=black, very thick](-1,0) circle (0.03);
				\filldraw[color=red, fill=black, very thick](0.60,1.75) circle (0.02);
				\end{tikzpicture}
				
				\begin{tikzpicture}
				\fill[cyan!60] (0, 3.5) rectangle (3.5, -3.5);
				\fill[white] (-3.5, 3.5) rectangle (0, -3.5);
				\fill[pink] (0,0) -- (3.5,3) -- (3.5,-3) -- cycle;
				\draw [draw=black] (-3.5,3.5) rectangle (3.5,-3.5);
				\draw[<->,thick] (-3.5,0)--(3.5,0) node[right]{$X$};
				\draw[<->,thick] (0,-3.5)--(0,3.5) node[above]{$Y$};
				\draw[->,thick] (-1.0,0)--(3.5,-1.2) node[right]{};
				\draw[<->,thick] (1,0.83) arc[start angle=56, end angle=-56, radius=1];
				\draw[,-,thick] (0,0)--(3.5,3) node[right]{};
				\draw[,-,thick] (0,0)--(3.5,-3) node[right]{};
				\def\x{0}
				\def\y{0}
				\draw (\x-1, \y) circle (1);
				\node[anchor = south] at (\x-1.5, \y-1.70) {$D(-R,R)$};
				\node[anchor = south] at (\x-1.1, \y+0.01) {$-R$};
				\filldraw[color=red, fill=black, very thick](-1,0) circle (0.03);
				\node[anchor = south] at (\x+0.9, \y-0.11) {$\frac{2\pi}{d}$};
				\end{tikzpicture}
			\end{multicols}
		\end{center}
\begin{center}
\caption{{\label{g case}
The pink region of the right hand side image is $\{z \in \mathbb{C}: -\frac{\pi}{d} < Arg(z) <  \frac{\pi}{d}\}$ where $Q$ is injective. It  is mapped under $g^{-1}$ to the pink region  of the  left hand side image which contains $\{z \in \mathbb{C}: \beta \leq  Arg(z) \leq  \frac{\pi}{2}~\mbox{and}~|z| \geq \widetilde{M}\}$.}}
		\end{center}
	\end{figure}
	
\par 	Let  $R =\max \{|z| \in \mathbb{C}: P'(z)=0\}$.  Then the  function $g(z)=ze^{-i\frac{\pi}{2}}-R$  maps $H_R=\{z \in \mathbb{C}: \Im(z)>R\}$ conformally onto the right half-plane $\{z:\Re(z)>0\}$.
	Consider the map $Q(z)=P ( ( g^{-1}(z))$ and note that each of its critical points is  the image  of a critical point of $P$ under $g$ (see Figure~\ref{g case}). By the choice of $g$, $Q$ is a polynomial without any critical point in the right half-plane $\{z \in \mathbb{C}: \Re(z)>0 \}$. By the conclusion of the previous paragraph, $Q$ is injective in the sector $\{z \in \mathbb{C}: -\frac{\pi}{d} < Arg(z) <  \frac{\pi}{d}\}$. This means that $P$ is injective in the sector with vertex at $iR$ and with opening of $\frac{2 \pi}{d}$, i.e.,  $  \{z \in \mathbb{C}: {\frac{\pi}{2}-\frac{\pi}{d}} < Arg(z- iR) <  {\frac{\pi}{2}+\frac{\pi}{d}}\}$. 
 
Recall that $R_{-\frac{\pi}{d}}$ denote the ray emanating from the origin and containing all the points with principal argument $-\frac{\pi}{d}$. Then  $g^{-1}(R_{-\frac{\pi}{d}})$ is a ray emanating from the point $iR$ and making the angle $\frac{\pi}{2} -\frac{\pi}{d}$ with the positive $X$-axis. Since $ \frac{\pi}{2} -\frac{\pi}{2d} <\beta < \frac{\pi}{2}$, we have  $\frac{\pi}{2}-\frac{\pi}{d}< \frac{\pi}{2}-\frac{\pi}{2d} < \beta $. In other words, the  ray $g^{-1}(R_{-\frac{\pi}{d}})$ intersects 
the ray  $R_\beta =\{z \in \mathbb{C}: Arg(z)=\beta\}$  at  a unique point, say $w$ (see Figure~\ref{g case}). Choose $\widetilde{M}$ to be $1+ \max \{R, |w|\}$. Clearly, the set  $\{z \in \mathbb{C}: \beta \leq  Arg(z) \leq  \frac{\pi}{2}~\mbox{and}~|z| \geq \widetilde{M}\} $ is contained in $\{z \in \mathbb{C}: {\frac{\pi}{2}-\frac{\pi}{d}} < Arg(z -iR) <  {\frac{\pi}{2}+\frac{\pi}{d}}\}$, on which $P$ is already shown to be injective. Thus $P$ is injective in an open set containing $\{z \in \mathbb{C}: \beta \leq  Arg(z) \leq  \frac{\pi}{2}~\mbox{and}~|z| \geq \widetilde{M}\} $. 	
\end{proof}
\section{Proofs of results}
\begin{proof}[\bf{Proof of Theorem \ref{Main_thm}}]
Let	$P(z)=a_{0}+a_{1}z +\cdot\cdot\cdot+a_{d}{z }^d$, where $a_0,a_1,\ldots,a_d$ are complex numbers and $a_d \neq 0$. Let $f(z)=e^z+P(z)$ and $\gamma$ be an unbounded curve. In view of Lemma~\ref{unbded-curve}, it is enough to show that $f(\gamma)$ is unbounded. This is to be done by proving that $\{f(z_n)\}_{n>0}$ is unbounded for some unbounded sequence $\{z_n\}_{n>0}$ on $\gamma$.
\par
 There are two mutually exclusive possibilities depending on the position of the \textit{unbounded part} of $\gamma$. To describe these, let $\gamma_{M}=\{z\in\gamma : \Re(z)< M\}$ for a real number $M$. This is the part of $\gamma$ lying left to the vertical line $\{z:\Re(z)=M\}$.
	
	\textit{\bf{Case 1.}} Let there exist a real number $M_0$ such that  $\gamma_{M_0}$ is unbounded. 
In this case, choose a sequence $\{z_n\}_{n>0}$ on $\gamma_{M_0}$ such that $\lim_{n\to \infty} z_n=\infty$. Then  
$|f(z_{n})|=\left|P(z_n)+e^{z_{n}}\right| \geq |P(z_n)|-|e^{z_n}|. $
It follows from Lemma~\ref{poly-inequality}(1) that, for all sufficiently large $n$,  $|f(z_{n})|\geq{\frac{1}{d} {\sum_{i=0}^{{d-1}}{|a_{i}||z_{n}|^{i}}}}-|e^{z_n}|.$  As $\Re(z_n)<M_0$ for all $n$,  $|e^{z_n}|=e^{\Re(z_n)}<e^{M_0}$  and, therefore $$|f(z_{n})|\geq{\frac{1}{d} {\sum_{i=0}^{{d-1}}{|a_{i}||z_{n}|^{i}}}}-e^{M_0}.$$
Since the right hand side goes to $\infty$ as $n \to \infty$,  $\{f(z_{n})\}_{n>0}$ is unbounded. 
   
\textit{\bf{Case 2.}} Let  $\gamma_M$ be bounded for each  real number $M$, i.e., for every unbounded sequence $\{z_n\}_{n>0}$ on $\gamma$, $\Re(z_n) \to +\infty$ as $ n \to \infty$.	
There are two subcases depending on the imaginary part of $z_n$.
	
	{\underline{Subcase (2a).}} 

	Let there exist $\alpha\in(0,\frac{\pi}{2})$ such that $\gamma_{\alpha}=\{z\in\mathbb{\gamma}: Arg(z)\in (-\alpha,\alpha)\}$ is unbounded (see Figure~\ref{unboundedpart-2a}). Let  $\{z_n\}_{n>0}$ be  an unbounded sequence on $\gamma_{\alpha}$.
Since $|f(z_{n})|\geq e^{\Re(z_n)}-|P(z_n)|$, we have
$
		|f(z_{n})|\geq e^{\Re(z_n)}-\left({\sum_{i=0}^{d}{|a_{i}|}}\right)|z_{n}|^d
		$ for all sufficiently large $n$ by Lemma~\ref{poly-inequality}(2).
		Since  $(-\tan\alpha) \Re(z_n)<\Im (z_{n})<(\tan\alpha)\Re(z_n)$ for all $z_n \in \gamma_\alpha$, we have
$$   |z_{n}|^d <  (1+\tan^2\alpha)^{\frac{d}{2}}|\Re(z_n)|^d.$$
 Now, it  follows  that
	\begin{equation}
|f(z_n)| > e^{\Re(z_n)}-\left(\sum_{i=0}^{d}|a_i|\right)(1+{\tan^2\alpha})^{\frac{d}{2}}|\Re(z_n)|^d.
\label{rightwardsector}
\end{equation}

\begin{figure}[h!]
	\begin{center}
		\begin{multicols}{2}
\begin{tikzpicture}
			\fill[cyan!60] (-3.5, 3.5) rectangle (0, -3.5);
			\fill[pink] (0, 3.5) rectangle (3.5, -3.5);
			\fill[cyan!60] (0,0) -- (0,3.5) -- (1.7,3.5) -- cycle;
			\fill[cyan!60] (0,0) -- (0,-3.5) -- (1.7,-3.5) -- cycle;
			\draw[gray, -,thick] (0,0)--(1.7,-3.5) node[right]{};
			\draw[gray, -,thick] (0,0)--(1.7,3.5) node[right]{};
			\draw [draw=black] (-3.5,3.5) rectangle (3.5,-3.5);
			\draw[<->,thick] (-3.5,0)--(3.5,0) node[right]{$X$};
			\draw[<->,thick] (0,-3.5)--(0,3.5) node[above]{$Y$};
			\def\x{0}
			\def\y{0}
			\node[anchor = south] at (\x+1.7, \y+1.1) {$\gamma$};
			\draw [red, xshift=0cm] plot [smooth, tension=1] coordinates {(0.1,0.7) (0.3,1.2) (0.4,-0.7) (0.6,-1.9) (0.9,-0.1) (1.1,-0.2) (1.2,0.5) (1.2,0.7) (1.3,1.2) (1.5,0.9) (1.8,0.9) (2,1.2)  (2.3,1.5) (2.4,1.9) (2.5,2.32) (2.6,2.2) (2.8,2.5)  (3,2.7) (3.3,3.5)};
			
\end{tikzpicture}
 \caption{\label{unboundedpart-2a}    $\gamma $ in  Subcase 2(a).}	
		
\begin{tikzpicture}
			\fill[pink] (-3.5, 3.5) rectangle (3.5, -3.5);
			\fill[cyan!60] (-3.5, 3.5) rectangle (0, -3.5);
			\fill[cyan!60] (1.7, 3.5) rectangle (3.5, -3.5);
			\fill[pink] (0,0) -- (0,3.5) -- (1.7,3.5) -- cycle;
			\fill[pink] (0,0) -- (0,-3.5) -- (1.7,-3.5) -- cycle;
			\fill[cyan!60] (0,0) -- (1.7,0) -- (1.7,3.5) -- cycle;
			\fill[cyan!60] (0,0) -- (1.7,0) -- (1.7,-3.5) -- cycle;
			\draw [draw=black] (-3.5,3.5) rectangle (3.5,-3.5);
			\draw[fill=cyan!60] (0,0) -- +(90:1.1) arc (90:45:1.1);
			\draw[fill=cyan!60] (0,0) -- +(-90:1.1) arc (-90:-45:1.1);
			\draw[gray, -,thick] (0,0)--(1.7,-3.5) node[right]{};
			\draw[gray, -,thick] (0,0)--(1.7,3.5) node[right]{};
			\draw[gray, -, thick] (-3.5,1.1)--(3.5,1.1) node[right]{};
			\draw[<->,thick] (-3.5,0)--(3.5,0) node[right]{$X$};
			\draw[<->,thick] (0,-3.5)--(0,3.5) node[above]{$Y$};
			\draw[thick, ->] (0.4,0) arc (0:33:1);
			\def\x{0}
			\def\y{0}
			\draw (\x, \y) circle (1.1);
			\node[anchor = south] at (\x+0.95, \y+1.45) {$\tilde{z}$};
			\node[anchor = south] at (\x+0.21, \y+1.4) {$z^\star$};
			\node[anchor = south] at (\x+0.6, \y+2.25) {$z'$};
			\node[anchor = south] at (\x-0.3, \y+2.25) {$R_{\frac{\pi}{2}}$};
			\node[anchor = south] at (\x+1.55, \y+2.25) {$R_\beta$};
			\node[anchor = south] at (\x+0.3, \y+3.0) {$\gamma$};
			\node[anchor = south] at (\x+0.7, \y+0.01) {$\beta$};
			\filldraw[color=black!500, fill=black!500, very thick](0.70,1.
			8) circle (0.01);
			\filldraw[color=black!500, fill=black!500, very thick](0.15,1.8) circle (0.01);
			\filldraw[color=black!500, fill=black!500, very thick](0.58,2.3) circle (0.01);
			\draw [red, xshift=0cm] plot [smooth, tension=1] coordinates {(0.85,2) (0.1,0.6) (0.2,-0.7) (0.3,-1.4) (0.15,-1.3) (-0.3,0.4) (-0.1,1.4) (0.56,2.3) (-0.55,1.35) (-0.5,-0.4) (-0.35,-0.8) (0.1,-2.1) (0.3,-2.9) (-0.8,-1.4) (-1,-0.8) (-1.25,0.3) (-0.8,1.8) (0.2,2.6) (0.5,3) (0.6,3.5)};			
\end{tikzpicture}
		 \caption{\label{unboundedpart-2b}$\gamma $ in  Subcase 2(b).}	 
		\end{multicols}
	\end{center}
\end{figure}

 
	Since $\Re(z_n)\to+\infty$ as $n\to \infty$, it follows that $\{f(z_{n})\}_{n>0}$ is unbounded.
    
\underline{Subcase (2b).}
	Let there be no $\alpha\in(0,\frac{\pi}{2})$ such that $\gamma_{\alpha}=\{z\in\mathbb{\gamma}: Arg(z)\in (-\alpha,\alpha)\}$ is unbounded i.e., for each $\alpha\in(0,\frac{\pi}{2})$, $\gamma_{\alpha}$ be bounded. Then for every real number $M$ and $\beta\in(0,\frac{\pi}{2})$, the two sets $\gamma_M=\{z\in\gamma : \Re(z)< M\}$ and $\gamma_\beta=\{z\in\gamma: Arg(z)\in (-\beta,\beta)\}$ are bounded. Therefore, there is a real number $K$ depending possibly on $M$ and $\beta$ such that the closed ball $\{z:|z|\leq K\}$ contains $\gamma_M$ and $\gamma_{\beta}$. Since $\gamma$ is unbounded,  there are points $z$ on $\gamma$ with $|z|>K$ and each such point satisfies $Arg(z)\in(\beta,\frac{\pi}{2})$ or  $Arg(z)\in(-\frac{\pi}{2},-\beta)$. Choose $M=0$ and $\beta$ such that \begin{equation}d \left |\frac{\pi}{2}-\beta \right |<\frac{\pi}{8}.
	\label{beta-choice}
	\end{equation} 
	Consider $K$ corresponding to these values of $M$ and $\beta$. Also, consider $\gamma_{\beta}^+=\{z\in\gamma: |z|>K~\mbox{and} ~Arg(z)\in (\beta,\frac{\pi}{2})\}$ and $\gamma_{\beta}^-=\{z\in\gamma: |z|>K ~\mbox{and} ~Arg(z)\in (-\frac{\pi}{2},-\beta)\}$. Then clearly at least on of these sets is unbounded.
	
Suppose that $\gamma_{\beta}^+$ is unbounded. We are going to provide a sequence $\{z_n\}_{n>0}$ on $\gamma_{\beta}^+$ such that $\{f(z_n)\}_{n >0}$ is unbounded.

 We assert that $\Im{(\gamma_{\beta}^+)}=\{\Im{(z)}:z\in\gamma_{\beta}^+\}$ contains an unbounded interval. In order to show this, fix $\tilde{z}\in\gamma_{\beta}^+$ such that $\Im{(\tilde{z})}>K$ (see Figure~\ref{unboundedpart-2b}). Let $z'\in\gamma_{\beta}^+$ with $\Im{({z'})}>\Im{(\tilde{z})}$. Then there is a connected subset $\gamma'$ of $\gamma$ joining   $z'$ with $\tilde{z}$ (as both the points are on the curve $\gamma$). This $\gamma'$ is either completely contained in $\gamma_{\beta}^+$ or it contains a connected subset of $\gamma_{\beta}^+$ joining $z'$ with a point $z^\star\in \gamma_{\beta}^+$ with $\Im{(z^\star)}=\Im{(\tilde{z})}$. In both the cases, the set $\Im{(\gamma_{\beta}^+)}$ contains the interval  $[\Im{(\tilde{z})}, \Im{({z'})}]$. Since $z'$ is arbitrary  and $\gamma_{\beta}^+$ is unbounded, \begin{equation}
  \Im{(\gamma_{\beta}^+)} \mbox{ contains the  interval }~[\Im{(\tilde{z})},\infty). 
  \label{unbounded-gamma-plus}
 \end{equation} 
	
	For $\theta >0$, let $R_{\theta}=\{ r e^{i \theta}: r >0\}$ be the ray emanating from the origin. Note that $ Arg(P(z))=dArg(z)+Arg \left(\frac{P(z)}{z^d}\right)~\mbox{for all non-zero} ~z\mbox{ such that}~ P(z)~\mbox{is also non-zero}.$

Considering the value of  $\frac{P(z)}{z^d}$ as $z \to \infty$ along a  ray $R_{\theta}$, we have,
	$$\lim_{r\to\infty} \frac{P(re^{i\theta})}{r^d e^{id\theta}}=\lim_{r\to\infty}\left(\frac{a_0}{r^d}+\frac{a_1 e^{i \theta}}{r^{d-1}} +\cdots+   \frac{ a_{d-1}~e^{i(d-1)\theta}}{r}+
a_de^{id\theta}\right)\frac{1}{e^{id\theta}}=a_d .$$

 Since the principal argument is a continuous function in $\mathbb C\setminus\{z:\Re(z)\leq 0$ and $\Im(z)=0\}$, we get 
 \begin{equation}
  \lim_{z=r e^{i\theta}, ~ r\to\infty}Arg(P(z))=d\theta+Arg(a_d),
  \label{arg-along-ray}
 \end{equation} whenever $a_d\in\mathbb{C}-\{z:\Re(z)\leq 0$ and $\Im(z)=0\}$. If $a_d< 0$ then we choose $Arg (z)$ such that $0<Arg(z)<2\pi$ i.e., we choose a branch different from the principal branch so that Equation~(\ref{arg-along-ray}) holds.  Therefore, for every  ray $R_\theta$ and fixed $\epsilon>0$, there exists a finite $M_\theta>0$ such that, 
\begin{equation}
|Arg(P(z))- \left(d\theta+Arg(a_d)\right)|<\epsilon~\mbox{whenever} ~z \in R_{\theta} ~\mbox{and}~ |z|>M_\theta. 
\label{arg-epsilon}
\end{equation} 

For $\beta \in (0, \frac{\pi}{2})$, there exists $M_{\beta} >0$ such that    $  d \beta +Arg(a_d)-\epsilon < Arg(P(z)) < d \beta +Arg(a_d)+\epsilon$  for all $z \in R_{\beta}$ with  $|z|>M_\beta $.  Similarly, there is an $M_{\frac{\pi}{2}}>0$ such that  for all $z \in R_{\frac{\pi}{2}}$ with $|z|>M_{\frac{\pi}{2}} $, the image $P(z)$ satisfies $  \frac{d\pi}{2} +Arg(a_d)-\epsilon < Arg(P(z)) <   \frac{d\pi}{2} +Arg(a_d)+\epsilon$. Since $\beta < \frac{\pi}{2}$, we have $ d \beta +Arg(a_d)-\epsilon <  \frac{ d\pi}{2} +Arg(a_d)-\epsilon$ which in turn is  less than  $ \frac{d\pi}{2} +Arg(a_d) +\epsilon$. Let $$S_P =\{z: d\beta+Arg(a_d)-\epsilon<  Arg(z) <\frac{d\pi}{2}+Arg(a_d)+\epsilon \}.$$

Recall that $d (\frac{\pi}{2}-\beta) < \frac{\pi}{8}$. Let $0< \epsilon < \frac{\pi}{16}$ so that 
 $( \frac{d \pi}{2} +Arg(a_d) +\epsilon)- (d \beta +Arg(a_d)-\epsilon )< d(\frac{\pi}{2}-\beta) +2 \epsilon < \frac{\pi}{4}$. Then the sector $S_P$ has opening less than $\frac{\pi}{4}$. Since $d(\frac{\pi}{2}-\beta) < \frac{\pi}{8}$ implies  $\beta > \frac{\pi}{2}-\frac{\pi}{2d}$, it follows from Lemma ~\ref{sectoral-univalence-polynomials} that $P$ is injective in  $\{z: \beta \leq Arg(z) \leq \frac{\pi}{2}~\mbox{and}~ |z| \geq  \widetilde{M}\}$ for some $\widetilde{M}>0$. If $K$ (chosen earlier) is bigger than $\widetilde{M}$, we replace $\widetilde{M}$ by $\max \{K,\widetilde{M}\}$ and denote the set   $\{z: \beta \leq Arg(z) \leq \frac{\pi}{2}~\mbox{and}~ |z| \geq  \widetilde{M}\}$ by $C$. Injectivity of $P$ on an open set containing $C$ gives that the boundary $\partial P(C)$ of $P(C)$ is the same as the image  $P(\partial C)$ of the boundary of $C$ under $P$. Since $\partial C$ is a simple closed curve in $\widehat{\mathbb{C}}$, so also its image $P(\partial C)$ due to the injectivity of $P$ on it. By the Jordan Curve Theorem, there are two components of $\widehat{\mathbb{C}} \setminus P(\partial C)$. As $P(\partial C)= \partial P(C),$ again due to the injectivity of  $P$ on $C$,  $P(C)$ is precisely one of the  components of $\widehat{\mathbb{C}} \setminus P(\partial C)$. Now $P(C)$ is either completely contained in $S_P$ or the complement of $S_P$ is completely contained in $P(C)$. However, for $\beta < \beta' < \frac{\pi}{2}$, there is an $M' >0$ such that $P(M' e^{i \beta'}) \in  S_P $ by (\ref{arg-epsilon}). As $ M' e^{i \beta'} \in C$, $P(C)$ is completely contained in $S_P$.

Now the image of $\gamma_{\beta}^+$ under $z\mapsto P(z)$ is contained in $S_P$, with opening less than $  \frac{\pi}{4} $. Therefore the image of $\gamma_{\beta}^+$ under $z\mapsto -P(z)$ is contained in the sector $-S_{P}=\{z:-z\in S_{P}\}$. 
\par Now we choose $z_{0}\in\gamma_{\beta}^+$ such that $e^{z_{0}}\in S_{P}$ because $\gamma_{\beta}^+$ contains points with all possible large positive imaginary parts by (\ref{unbounded-gamma-plus}). For the same reason, we can take $z_{n}\in\gamma_{\beta}^+$ such that $\Im{(z_{n})}=\Im{(z_{0})}+2n\pi$ for all natural numbers $n\geq 1$. The points $e^{z_{n}}$ are on the ray emanating from the origin, containing $e^{z_{0}}$ and lying in $S_{P}$. Both the sequences $\{e^{z_{n}}\}_{n>0}$ and $\{-P(z_n)\}_{n>0}$ tend to $\infty$, but are contained in two sectors   that are reflections of each other with respect to the origin. The number $|f{(z_{n})}|$ is nothing but the distance between $e^{z_{n}}$ and $-P(z_n)$ and that tends to $+\infty$ as $n\to\infty$. This shows that $\{f(z_n)\}_{n>0}$ is unbounded.

If $\gamma_{\beta}^+$ is bounded then $\gamma_{\beta}^-$ must be unbounded. Arguing as above, it can be shown that $\Im{(\gamma_{\beta}^-)}=\{\Im{(z)}:z\in\gamma_{\beta}^-\}$ contains $(-\infty,\Im{(\widetilde{w})}]$ for some $\widetilde{w}\in\gamma_{\beta}^-$. Further, the image of $\gamma_{\beta}^-$ under $z\mapsto -P(z)$ is contained in  a sector $T_{\beta}$ with opening less than $\frac{\pi}{4}$. Also a sequence of points $w_{n}\in\gamma_{\beta}^-$ can be found such that the points $e^{w_{n}}$ lie on a ray emanating from the origin and lying in $-T_{\beta}=\{z:-z\in T_{\beta}\}$. Following the same argument as in the last part of the previous paragraph, it can be  seen that $\{f(w_{n})\}_{n>0}$ is unbounded.

The proof is now complete.
\end{proof}

	\begin{proof}[\bf{Proof of Corollary \ref{coro 1}}] 
If $P$ is a linear polynomial then $P'$ is non-zero and  $f'(z)=e^z+P'(z)$ has infinitely many roots. If $P$ is a non-linear polynomial then  $f'(z)=e^z+P'(z)$ has  bov $\infty$ by Theorem \ref{Main_thm}. By Lemma~\ref{lemma-prelim}(2), the point $0$ is taken infinitely often by $f'$. Let these points, the critical points of $f$ be denoted by $z_n$ for $n=1, 2, 3,\hdots$. Then $\lim_{n\to \infty}z_n=\infty$,  otherwise $f'$ becomes identically zero by the Identity Theorem. Each critical value $w_n$ of $f$ corresponding to $z_n$ satisfies $w_n=P(z_n) -P'(z_n)$. It follows that  $\lim_{n\to \infty}w_n=\infty$ i.e., $\infty$  is the only limit point of the critical values of $f$. 

A critical point  of $\frac{1}{e^z+P(z)} +a$  is either a root of $e^z+P'(z)=0$ or that of  $e^z+P(z)=0$. Each root of the later is a pole and its image is $\infty$.  
As shown in the previous paragraph,   there are infinitely many roots of $e^z+P'(z)=0$. Let these roots be denoted by $z_n$ for  $n= 1, 2, 3, \hdots$. Then the critical values of $\frac{1}{e^z+P(z)}+a$ are given by $\frac{1}{P(z_n)-P'(z_n)}+a$. Since  $\lim_{n\to \infty}(P(z_n) -P'(z_n))=\infty$ we have, $\lim_{n\to \infty}\frac{1}{P(z_n)-P'(z_n)}+ a=a$. Therefore $a$ is the only
limit point of the  critical values of $\frac{1}{e^z+P(z)}+a$. 
\end{proof}

The proof of Theorem~\ref{secondthm} uses some ideas used in that of  Theorem~\ref{Main_thm}.
\begin{proof}[\bf{Proof of Theorem \ref{secondthm}}]
  Let	$P(z)=a_{0}+a_{1}z +\cdot\cdot\cdot+a_{d}{z }^d$, where $a_0,a_1,\ldots,a_d$ are complex numbers and $a_d \neq 0$ and $f_k (z)=E^k (z)+P(z)$ for $k>1$. 
  
  \par Let $\gamma$ be an unbounded curve. Using Lemma~\ref{unbded-curve}, the proof would be completed by showing that $f_k (\gamma)$ is unbounded. This is to be done by proving that $\{f_k (z_n)\}_{n>0}$ is unbounded for some unbounded sequence $\{z_n\}_{n>0}$ on $\gamma$. 
  \\
  	\textit{\bf{Case 1.}} 
  	Let there be a sequence $\{z_n\}_{n>0}$  on $\gamma$ satisfying one of the following conditions, which are of course not mutually exclusive.
  	
  	\begin{enumerate}
  		\item There is an $l \in \{0,1,2,\hdots, (k-1)\}$ such that $\{\Re(E^{l}(z_n))\}_{n>0}$ is bounded above.
  		\item There is an $l \in \{0,1,2,\hdots, (k-2)\}$ such that $\{\Im(E^{l}(z_n))\}_{n>0}$ is unbounded. 
  	\end{enumerate} 
If (1) is true then  $\{ E^{l+1}(z_n)\}_{n>0}$ is bounded. Here, note that $l+1 \leq k$. If (2) holds then the set $ \{\Im(E^{l}(z)): z \in \gamma \}$ is an unbounded subset of the real line for each $l \in \{0,1,2,\hdots (k-2)\}$. This is because $E^{l}(\gamma)$ is a connected subset of the plane containing the sequence $\{E^{l}(z_n)\}_{n>0}$ and $\{\Im(E^{l}(z_n))\}_{n>0}$ is unbounded. Choose a sequence on $\gamma$, let it be denoted by $z_n$ again, such that $\Im(E^l (z_n))$ is an odd multiple of $\pi.$
 Then $E^{l+1} (z_n )=-E(\Re(E^l (z_n )))$
and consequently,  $\{E^{l+2} (z_n )\}_{n>0} $ is bounded. Here $l+2 \leq k$. Since $E$ maps every bounded sequence to a bounded sequence,  $\{ E^{k}(z_n)\}_{n>0}$ is bounded, i.e., $| E^{k}(z_n)|<M$ for some $M>0$ in both the situations mentioned above. Therefore,
$$
		|f_k(z_{n})|=\left|P(z_n)+E^k(z_{n})\right|
		\geq {\frac{1}{d} {\sum_{i=0}^{{d-1}}{|a_{i}||z_{n}|^{i}}}}- M,
$$
where the last inequality follows from Lemma \ref{poly-inequality} (1). The right hand side tends to $\infty$ as $ n\to \infty$.  Hence $\{f_k(z_{n})\}_{n>0}$ is unbounded.
 \\
\textit{\bf{Case 2.}} 
 	Let there be a sequence $\{z_n\}_{n>0}$  on $\gamma$ such that $\Re(E^{l} (z_n)) \to +\infty$ as $n \to \infty$ for each $l \in  \{0,1,2,\hdots, (k-1)\}$ and $\{\Im(E^{l}(z_n))\}_{n>0}$ is  bounded for each $l \in \{0,1,2,\hdots, (k-2)\}$. In particular, $\Re(z_n) \to +\infty$ as $n \to \infty$ and  $\{\Im(z_{n}): n >0\}$ has a finite accumulation point, say $y_0$.
 	 There are two cases depending on  the boundedness of  $\{\Im(E^{k-1}(z_n))\}_{n>0}$.

{\underline{Subcase (2a).}} Let $\{\Im(E^{k-1}(z_n))\}_{n>0}$ be unbounded. Then we can choose, as in Case 1,  a sequence $\{w_n\}_{n>0}$ on $E^{k-1}(\gamma)$ with $\Im(w_n)=\theta+2n\pi$ for any fixed $\theta\in(0,2\pi]$. Corresponding to each $w_n$, there is a $z_{n}$ on $\gamma$ such that $E^{k-1}(z_n)=w_n$ with $\lim_{n\to \infty} z_n=\infty$. For $R_{\theta}=\{ r e^{i \theta}: r >0\}$, we have
\begin{equation}
E^k(z_n)=E(w_n)=e^{\Re(w_n)}e^{i\theta}\in R_{\theta}.
\label{ray}
\end{equation}

Let $R =1+ \max \{|z| \in \mathbb{C}: P'(z)=0\}$ and consider the sector $S_\epsilon=\{z:  -\epsilon<~\mbox{Arg}(z -R-i y_0)< \epsilon\}$ for $0 <\epsilon < \frac{\pi}{d}$ i.e., $S_\epsilon$ is a sector with vertex at $R+i y_0$ open towards right and is symmetric with respect to the line parallel to the real axis with opening less than $\frac{2\pi}{d}$. Now $P(z+R+iy_0)$ is a polynomial with degree $d$ that has no root in the right half-plane. By Lemma \ref{sectoral-univalence-polynomials}, $P(z+R+iy_0)$ is injective in the sector $\{z: -\frac{\pi}{d} < Arg(z) < \frac{\pi}{d}\}$. In other words, $P$ is injective in $S_\epsilon$. Arguments used in the proof of Subcase (2b) of Theorem \ref{Main_thm} give that the image of $S_\epsilon$ for a suitably small $\epsilon$ under $P$ is contained in a sector  $S_P$ with opening less than $\frac{\pi}{4}$. Thus, $-P$ maps $S_\epsilon$ into  the sector $-S_P$.
\par  Now we can choose $\theta\in(0,2\pi]$ such that the ray $R_\theta\subset S_P$ and  by Equation~(\ref{ray}), we find a  sequence   $\{z_n\}_{n>0}$ on $\gamma$, which we denote by the same notation, such that $E^k(z_n)\in R_{\theta}$ for all $n$. Both the sequences $\{E^k({z_{n}})\}_{n>0}$ and $\{-P(z_n)\}_{n>0}$ tend to $\infty$, but are contained in two sectors $S_P$ and $-S_P$ respectively. These sectors are reflections of each other with respect to the origin. Therefore, $|f_k(z_n)|=|E^k(z_n)+P(z_n)|=|E^k(z_n)-(-P(z_n))|\to\infty$ as $n\to\infty$. Hence $\{f_k(z_{n})\}_{n>0}$ is unbounded.

{\underline{Subcase (2b).}} Let $\{\Im(E^{k-1}(z_n))\}_{n>0}$ be bounded.  Then for any $l \in \{0,1,2,\hdots, (k-2)\}$, $\{\Im{(E^{l}(z_n))}\}_{n>0}$ cannot accumulate at $\frac{\pi}{2}+m \pi $ for any integer $m$. This is because, in that case the argument of $E^{l+1}(z_n)$ would tend to $\pm \frac{\pi}{2}$ which cannot be true as  $\{\Im(E^{l+1}(z_n))\}_{n>0}$ is bounded and $\Re{(E^{l+1}(z_n))} \to +\infty$ as $n \to \infty$ for $l+1 \leq k-1$. Note that this follows for all $l$ with $ 0 \leq l \leq (k-3)$ even if $\{\Im{(E^{k-1}(z_n))}: n>0\}$ is not assumed to be bounded.
If required, by deleting first few terms of $\{z_n\}_{n>0}$ we have $\Re{(E^{l+1}(z_n))}=E(\Re( E^{l}(z_n))) \cos \Im(E^{l}(z_n) )>0$ for each $0 \leq l+1 \leq (k-1)$ so that $\{\Im{(E^{l}(z_n))}\}_{n>0}$ cannot accumulate at any point of $(\frac{\pi}{2}+2m \pi, \frac{\pi}{2}+2m \pi+\pi)$ for any integer $m$. Thus, there exists a $\delta \in (0, \frac{\pi}{2})$ such that for each $n$ and $l \in \{0,1,2,\hdots, (k-2)\}$, $\Im{(E^{l}(z_n))} \in (2m \pi -\delta, 2 m\pi+\delta)$ for some integer $m$. This gives a $K_1 >0$ such that $\cos \Im( E^{l}(z_n )) > K_1$  and consequently, for each $ l \in \{0,1,2,\cdots, (k-2)\}$, we have
\begin{equation}
	 \Re(E^{l
	 	+1}(z_n))= E(\Re(E^l (z_n))) \cos \Im(E^l (z_n) ) > K_1   E(\Re(E^l (z_n))) ~.
 	\label{Ek-real}
\end{equation}

In particular,  $\Re(E(z_n)) >K_1 E(\Re(z_n)) $. From this and Inequality ~(\ref{Ek-real}), it follows that $\Re(E^2 (z_n))>K_1 E(\Re(E(z_n)))>K_1 \Re(E(z_n))> K_1 ^2 E(\Re(z_n)) $. Here the second inequality  uses the strict increasingness of $x \mapsto e^x$ on the real line. By repeating this argument, we have $\Re(E^{k-1} (z_n)) > K_1 ^{k-1} E(\Re(z_n))$. Now 
\begin{equation}
	|E^k (z_n)|=E(\Re(E^{k-1}(z_n)))>E(K_1 ^{k-1} E(\Re(z_n))) > K_1 ^{k-1}E(\Re(z_n)) 
	\label{Ek-lowerbound}
\end{equation}

 Since $|P(z_n)| \leq ({\sum_{i=0}^{d}{|a_{i}|}}) |z_n |^d $ for all sufficiently large $n$ by Lemma~\ref{poly-inequality}(2),  we have the inequality 
	\begin{equation}
	|f_k(z_{n})|\geq |E^k(z_n)|-\left({\sum_{i=0}^{d}{|a_{i}|}}\right)|z_{n}|^d.
\end{equation}
As $\Re(z_n) \to +\infty$ as $n \to \infty$,  there is an $\alpha \in (0, \frac{\pi}{2})$ such that $|z_n|^d < (1+\tan^2\alpha)^{\frac{d}{2}}|\Re(z_n)|^d $ for all sufficiently large $n$ (see  Inequality~(\ref{rightwardsector})), we have
$$ |f_k (z_n)| > |E^k(z_n)|-\left(\sum_{i=0}^{d}|a_i|\right)(1+{\tan^2\alpha})^{\frac{d}{2}}|\Re(z_n)|^d . $$
 Using Inequation (\ref{Ek-lowerbound}), we get 
 $$
 |f_k(z_{n})| > K_1 ^{k-1}E(\Re(z_n))-\left(\sum_{i=0}^{d}|a_i|\right)(1+{\tan^2\alpha})^{\frac{d}{2}}|\Re(z_n)|^d. $$
 Denoting $\Re(z_n), K_1 ^{k-1}$ and $\left(\sum_{i=0}^{d}|a_i|\right)(1+{\tan^2\alpha})^{\frac{d}{2}}$ by $x_n, M_1$ and $M_{2}$ respectively, the right hand side of the above inequality takes the form $M_1 E(x_n) -M_2 x_n ^{d}$. Here $x_n, M_1, M_2 >0$ and it is easy to see that  $M_1 E(x_n) -M_2 x_n ^{d} \to \infty$ as $n \to \infty$.
 We conclude that  $\{f_k(z_{n})\}_{n>0}$ is unbounded.

 \par  Therefore  $f_k(z)=E^k(z)+P(z)$ remains unbounded along every unbounded curve in all the cases. This completes the proof.
\end{proof}
 
\section{Examples}	In this section, we study the iteration of some entire functions outside the Eremenko-Lyubich class. It is well-known that the existence of a Baker wandering domain of a transcendental entire function $f$ implies that $f$ has  bov at $\infty$ (Theorem 2.3, of \cite{ChakraChakrabortyNayak 2016}). There are examples (Remark $3$ of \cite{ChakraChakrabortyNayak 2016}) of entire functions justifying that the converse is not true. The order of the function in this example was finite. We now provide an entire function with infinite order for which the converse is also not true.

\begin{example}{\label{1st exam}}
For $\beta<1$, the map $f_{2,\beta}(z)=E^2(z)+z-\beta$ has bov but  does not have any Baker wandering domain.
\label{example-first}	
\end{example}	
\begin{proof}
By Theorem \ref{secondthm},  $f_{2,\beta}$ has bov.
\par 
	If possible assume that $W$ be a Baker wandering domain of $f_{2,\beta}$. Let $W_n$ be the Fatou component containing $f_{2, \beta}^n(W)$. Then dist$(W_n,0)\to\infty$ as $n\to\infty$ and each $W_n$ is bounded, multiply connected,  and  there exists $N  \in\mathbb{N}$ such that for all  $n\geq N$,  following properties are 
	 satisfied (see Theorem 1.2, \cite{BergRippStall 2013}):\begin{enumerate}
		\item $W_{n+1}$ surrounds $W_n$ and $0$ i.e., $0$ and $ W_n$ are contained in the bounded component of $\widehat{\mathbb{C}} \setminus W_{n+1}$;
		\item for a point $z_0\in W$, there exists an $0< \alpha<1$ such that $W_n$ contains the annulus $A(r_n, R_n)=\{z:r_n<|z|<R_n\}$ where $r_n=|f^n(z_0)|^{1-\alpha}$ and $R_n=|f^n(z_0)|^{1+\alpha}$.
	\end{enumerate}    Since $r_n \to \infty$ as $n \to \infty$, choose a larger $N$ if required so that $r_n > \frac{e-\beta}{2}$ for all $n \geq N $.  
Then  $r_N +R_N > e -\beta$. 	Also, $e> E^2(-r_N)$ gives that $e -\beta > E^2(-r_N)-\beta$ and consequently, $r_N +R_N > E^2(-r_N)-\beta$. In other words,  
\begin{equation}
E^{2} (-r_N)-r_N -\beta < R_N.
\label{example1-1}
\end{equation}
Since $\beta <1 < E^{2}(-R_N)$, we also have 
\begin{equation}
E^{2}(-R_N)-R_N -\beta > - R_N.
\label{example1-2}
\end{equation}
Since  the function $f_{2, \beta}$ is real valued on the real line and is strictly increasing, it  maps $(-R_N, -r_N)$ onto   $(E^{2}(-R_N)-R_N -\beta, E^{2}(-r_N)-r_N -\beta )$. Now, it follows from Inequalities~(\ref{example1-1}) and (\ref{example1-2}) that $f_{2, \beta} (-R_N, -r_N) \subset (-R_N, R_N)$.
This gives that $W_{N+1}$ (which contains $f_{2, \beta} (-R_N, -r_N)$) is  surrounded by $W_N$  leading to a contradiction to the first property of a Baker wandering domain as stated above. Hence $f_{2, \beta}$ does not have any  Baker wandering domain for any $\beta <1$.
\end{proof}

The previous example demonstrates a function with bov which has no Baker wandering domain. Now we  provide a function with bov that does not have any  wandering domain - Baker wandering or otherwise - at all.

\begin{example}
The Fatou set of $f_\lambda(z)=\lambda e^z+z+\lambda, 0<\lambda<2$ is the union of infinitely many  attracting domains and their iterated preimages. In particular, there is no Baker wandering domain for $f_\lambda.$
\label{example-second}
\end{example}
\begin{proof}
	The function $f_\lambda$ has bov by Remark~\ref{remark-maintheorem}.
	
The fixed points and critical points  of $f_\lambda$ are $\widetilde{z_k}=(2k+1)\pi i$ and    $z_k=\ln|\frac{1}{\lambda}|+(2k+1)\pi i$  respectively  for every integer $k$. Further, all the critical points are simple, i.e., the local degree of $f_\lambda$ at each of these points is two. The multiplier of each fixed point is  $1-\lambda$. Therefore, all the fixed points are attracting for $0<\lambda<2$. 
\par For each $\lambda$, $f_\lambda(z+2\pi i)=\lambda e^{z+2\pi i}+z+2\pi i+\lambda=(\lambda e^z+z+\lambda)+2\pi i=f_\lambda(z)+2\pi i$ and  $f^2_\lambda(z+2\pi i)=f_\lambda(f_\lambda(z)+2\pi i)=f_\lambda(f_\lambda(z))+2\pi i=f^2_\lambda(z)+2\pi i$. In general, we have $f^n_\lambda(z+2\pi i)=f^n_\lambda(z)+2\pi i$ for all $n$. Therefore,  $z\in\mathcal{F}(f_\lambda)$ if and only if $z+2\pi i\in\mathcal{F}(f_\lambda)$. i.e., 
\begin{equation}
 \mathcal{F}(f_\lambda)~\mbox{is invariant under}~  z\mapsto z+2\pi i.
 \label{invariance}
\end{equation}

Now we consider a semi-conjugacy between  $h_{\lambda}(z)=ze^{\lambda(z+1)}$  and  $f_\lambda(z)$ via  $E(z)=e^z$, i.e.,  $h_{\lambda} \circ E =E \circ f_{\lambda}$. Note that $h_\lambda ^n \circ E = E \circ f_\lambda ^n$ for all $n$. Indeed, a  point $z$ is in the Fatou set of $f_\lambda$ if and only if $E(z)$ is in the Fatou set of $h_\lambda$ (see page-252, ~\cite{Berg-1995}).  
\par  In order to determine the Fatou set of $h_\lambda$, first note that the only   critical point of $h_\lambda$ is $-\frac{1}{\lambda}$. Further, for $0 < \lambda <2$,  $-1$ and $0$ are  attracting and repelling fixed points of $h_\lambda$ respectively, and these are the only real fixed points of $h_\lambda$. 

We first show that $h_{\lambda}^2 (x) <x$  for all  $-1< x<0$. Note that $$h_{\lambda}^2 (x)=h_{\lambda} (x) e^{\lambda(h_{\lambda}(x)+1)}=
x e^{\lambda(2+x+xe^{\lambda(x+1)})}.$$

Consider the function $\phi_{\lambda}(x)=2+x+xe^{\lambda(x+1)}$ in $(-1,0)$ and observe that $\phi_{\lambda}'(x)=1+(1+\lambda x) e^{\lambda(x+1)}$ and $\phi_{\lambda}''(x)=  \lambda e^{\lambda(x+1)} (2+\lambda x)$, the later  being always  positive (as $0 <\lambda <2$ and $-1 < x<0$). Thus $\phi_{\lambda}'$ is strictly increasing. Now $ \phi_{\lambda}'(-1)>0$ gives that  $ \phi_{\lambda}'(x)>0$. Thus  $ \phi_{\lambda} $ is strictly increasing, and  $ \phi_{\lambda} (-1)=0$ gives that  $ \phi_{\lambda} (x)>0$ for all $x \in (-1,0)$. Therefore, $e^{\lambda(2+x+xe^{\lambda(x+1)})}>1$ and,  since $x<0$ we have  \begin{equation}
 h_{\lambda}^2 (x) <x~\mbox{for all}~  x \in (-1,0).
 \label{second-iterate}
\end{equation}
Using this inequality, we are going to show that $(-\infty, 0)$ is contained in the  attracting domain, also called  immediate attracting basin $\mathcal{A}(-1)$ of $-1$.
\begin{figure}[h!]
  \begin{center}
 	\begin{subfigure}{.50\textwidth}
 		\centering
 		\includegraphics[width=0.9\linewidth]{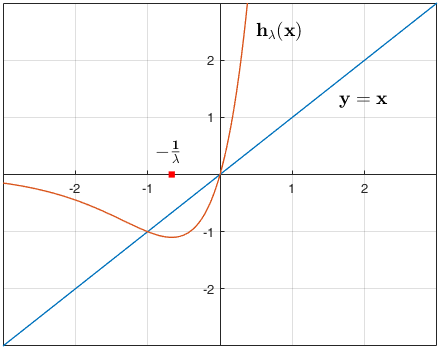}
 		 \caption{Graph of $h_\lambda (x)$}
 	\end{subfigure}%
 	\begin{subfigure}{.50\textwidth}
 		\centering
 		\includegraphics[width=0.9\linewidth]{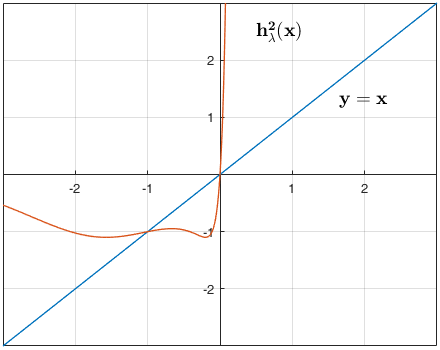}
 	 \caption{Graph of $h_{\lambda}^2 (x)$}
 	\end{subfigure}\\[1ex]
 \end{center}
 \begin{center}
		\caption{{\label{h_lambda(x)}  Graphs of  $h_{\lambda}(x)$ and   $h_{\lambda}^2 (x)$ for $\lambda=1.5$ }}
	\end{center}
 
\end{figure}

\par 
Note that $h_\lambda$ is decreasing in $(-\infty, -\frac{1}{\lambda})$ and increasing in $(-\frac{1}{\lambda}, \infty)$. Further, $h_{\lambda}(x) <0$ for all $x<0$ and $h_{\lambda}(x) >0$ for all $x>0$ (see Figure \ref{h_lambda(x)}). As $-1$ is an attracting   and $0$ is a  repelling fixed point of $h_\lambda$, there is an interval $(p,q)$ containing $-1$ and  contained in  $\mathcal{A}(-1)$ such that    $p$ and $q$ are on the boundary of $\mathcal{A}(-1)$. Clearly $-1 < q \leq 0$ as $0$ (being a repelling fixed point) is in the Julia set of $h_\lambda$. If $q \leq \frac{-1}{\lambda}$ then $h_\lambda (q) <-1$ and $-1 <h_{\lambda}^2 (q) $ and by Inequality~(\ref{second-iterate}), $h_{\lambda}^{2}(q) <q$.
This gives that $h_{\lambda}^2 (q) \in \mathcal{A}(-1)$,  whereas $q$ is on its boundary leading to a contradiction. If  $q > \frac{-1}{\lambda}$ and   $q <0$ then $h_\lambda (q)> h_\lambda (\frac{-1}{\lambda})$ (as $h_\lambda$ is increasing in $(\frac{-1}{\lambda},0)$). Again $q > \frac{-1}{\lambda}$ implies that $h_\lambda (\frac{-1}{\lambda})>p$. This gives that $h_{\lambda}(q)>p$, i.e., $h_{\lambda}(q) \in \mathcal{A}(-1)$ whereas $q$ is on its boundary leading to a contradiction as before.  Thus $q=0$ and in particular $(-1, 0) \subset \mathcal{A}(-1) $. If $p>-\infty$ then   $h_{\lambda}(p) >-1$ giving that $h_\lambda (p) \in \mathcal{A}(-1)$  leading to a similar contradiction as above. Hence $p=-\infty$ and $(-\infty, 0) \subset \mathcal{A}(-1)$.

For $x>0$, $h_\lambda '(x)>0$ and $h_\lambda (x) >x$.
Therefore $\{h_{\lambda}^n(x)\}_{n >0}$ is a strictly increasing sequence converging to $\infty$ for each $x>0$. As $h_\lambda$ has only finitely many singular values, it follows from Corollary $3$ and Theorem $12$ of \cite{Berg 1993} that $h_\lambda$ has neither Baker domain nor wandering domain. The critical value corresponding to the critical point $-\frac{1}{\lambda}$ lies in the attracting domain  corresponding to $-1$. Therefore by Theorem 7, \cite{Berg 1993}  the Fatou set of $h_\lambda$ is the union of the immediate attracting basin
$\mathcal{A}(-1)$ corresponding to $-1$ and all its iterated preimages. This gives that $[0, \infty]$ is contained in the Julia set of $h_\lambda$. To summarize,\begin{equation}
 (-\infty,  0) \subset \mathcal{F} (h_\lambda) ~\mbox{and}~  [0, \infty] \subset \mathcal{J} (h_\lambda).
 \label{h-dynamics}
\end{equation}
 
 Now, we return to the discussion of  $f_\lambda$.   For an integer $k$, let $L_k=\{z:\Im(z)=\pi k\}$. Then the imaginary part of $f_\lambda(x+i \pi k )=\lambda e^{x+i \pi k}+x+i\pi k+\lambda $ is always $\pi k$,  meaning that the horizontal line  $L_k$ is invariant under $f_\lambda$ for each real $\lambda$. It can be seen that, if $z \in L_k$  for an even $k$ then $f_{\lambda}^n (z) \to \infty$ as $n \to \infty$. On the other hand,   if $z \in L_k$  for an odd $k$ then  $f_{\lambda}^n (z) \to \widetilde{z_k}$ as $n \to \infty$ where $\widetilde{z_k}$ is an attracting fixed point of $f_\lambda$.
 
 Recalling $E(z)=e^z$, note that 
$E (L_k) = (0, \infty)$ for even $k$ whereas  $E (L_k) = (-\infty,0)$ for odd $k$. As mentioned earlier in this example, $z$ is in the Fatou set of $f_\lambda$ if and only if $E(z)$ is in the Fatou set of $h_\lambda$. In particular,  $L_k \subset \mathcal{J}(f_\lambda)$  for all even $k$ whereas  $L_k \subset \mathcal{F}(f_\lambda)$   for all odd $k$ (see Statement ~(\ref{h-dynamics})).  

The  full inverse image of $\mathcal{A}(-1)$ under $E$ consists of infinitely many components, each contained  in a horizontal strip $S_{k} = \{z \in \mathbb{C} : {2k\pi} \leq \Im{(z)} \leq {2(k+1)\pi} \; \}$ for some integer $  k $.  Each such  component is an immediate basin of an attracting fixed point of $f_\lambda$, which is a preimage of $-1$ under $E$ and are $2 \pi k i$-translates of each other for some integer $k$. This is a consequence of the semi-conjugacy between $h_\lambda$ and $f_\lambda$ via $E$. Let $\mathcal{A}(\widetilde{z}_k ) $ denote the immediate attracting basin of $\widetilde{z}_k = (2k+1)\pi i$. Then  $L_{2k+1} \subset \mathcal{A}(\widetilde{z}_k ) \subset S_k $ for each $k$.
 If $U $ is a Fatou component of $f_{\lambda}$ then $E(U)$ is a Fatou component of $h_\lambda$. As observed earlier, there is an $m$ such that $h_{\lambda}^{m} (E(U)) =\mathcal{A}(-1)$. Since $h_\lambda ^m \circ E = E \circ f_\lambda ^m$, $E(f_\lambda ^m(U)) =\mathcal{A}(-1)$. Thus $f_{\lambda}^m(U) =\mathcal{A}(\widetilde{z}_{k_0})$ for some integer $k_0$.
 Therefore, the Fatou set of $f_\lambda$ is the union of infinitely many immediate attracting basins of $\widetilde{z}_k$ and their iterated preimages for $0<\lambda<2$.\end{proof}

\begin{Remark}
There are infinitely many critical points of $f_{\lambda}$ and the set of its critical values is discrete and unbounded. Each critical point  is simple and corresponding to each critical value, there is exactly one critical point. 
It follows from \cite{Rempe2017} that  $f_{\lambda}$  has infinitely many pairwise disjoint simply connected domains of $f_\lambda$  with connected preimages.
\label{vanilla}\end{Remark}
\begin{example}
Let $F_\lambda (z)=f_\lambda (z)+2\pi i$ where $f_\lambda (z)=\lambda e^z +z +\lambda$, as given in Example~\ref{example-second}. Then   $F_\lambda (z)$ has wandering domains for $0<\lambda<2$. Furthermore, these wandering domains are  simply connected,  unbounded and escaping. In particular, there is no Baker wandering domain for $F_\lambda$.
\label{example-third}
\end{example}
\begin{proof}
	Recall from Example~\ref{example-second} that  $\mathcal{A}(\widetilde{z}_k)$ is the immediate basin of attraction of $f_\lambda$ corresponding to the fixed point $\widetilde{z_k}$ of $f_\lambda$. Since   $\mathcal{F}(f_\lambda)$ is invariant under the translation $z\mapsto z+2\pi i$,  $\mathcal{A}(\widetilde{z}_{k+1})=\mathcal{A}(\widetilde{z}_k)+2 \pi i$ for every integer $k$. As $F_\lambda$ maps $\mathcal{A}(\widetilde{z}_{k})$ onto $\mathcal{A}(\widetilde{z}_{k+1})$, each $\mathcal{A}(\widetilde{z_k})$ is a wandering domain  for $F_\lambda$. Being a periodic (invariant) Fatou component of an entire function $f_\lambda$, $\mathcal{A}(\widetilde{z}_{k})$	 is simply connected for every integer $k$. As $\mathcal{A}(\widetilde{z}_{k})$ contains the horizontal line $L_{2k +1}$, it is unbounded.
	\par	Since $F_\lambda \circ f_{\lambda} =f_{\lambda} \circ F_{\lambda}$, it follows from Lemma 4.5 of \cite{Baker 1984} that  $\mathcal{F}(f_\lambda)=\mathcal{F}(F_\lambda)$. Further,  $F^n_\lambda(z)=f^n_\lambda (z)+ 2n \pi i$ for each $n$. As for each $z \in \mathcal{A}(\widetilde{z}_{k})$,   $f^n_\lambda(z)\to \widetilde{z_k}$ as $n \to \infty$, we have 
		 as $n\to\infty$, $F_\lambda ^n (z) \to \infty $ for each $z \in \mathcal{A}(\widetilde{z_k})$. Therefore the wandering domains $ \mathcal{A}(\widetilde{z}_{k}) $ of $F_\lambda$ are escaping.
		 \par The wandering domains of $F_\lambda$ are simply connected and therefore are not Baker wandering domains. Since these wandering domains are unbounded, there cannot be any other Baker wandering domain for $F_\lambda$.   
\end{proof}
\section{Funding}
The first author is supported by the University Grants Commission, Govt. of India.

\end{document}